\newtheorem{thm}{Theorem}[section]
\newtheorem{lemma}{Lemma}[section]
\newtheorem{prop}[lemma]{Proposition}
\newtheorem{cor}[lemma]{Corollary}
\newtheorem{conj}[lemma]{Conjecture}
\theoremstyle{definition}
\newtheorem{remark}[lemma]{Remark}
\numberwithin{equation}{section}
\newcommand{\II}{\text{I\hspace{-0.050cm}I}}
\def\esmath{\ensuremath}
\def\phv{\ensuremath\varphi}
\def\NN{\esmath\mathbb N} 
\def\RR{\esmath\mathbb R} 
\newcommand{\p}{\partial}
\newcommand{\bn}{\begin{enumerate}}
\newcommand{\en}{\end{enumerate}}
\newcommand{\bi}{\begin{itemize}}
\newcommand{\ei}{\end{itemize}}
\newcommand{\bqq}{\begin{eqnarray*}}
\newcommand{\eqq}{\end{eqnarray*}}
\newcommand{\balg}{\begin{align*}}
\newcommand{\ealg}{\end{align*}}
\DeclareMathOperator{\rc}{Ric}
\DeclareMathOperator{\Rm}{Rm}
\begin{document}
\title[Type-$\II$ singularities in Ricci flow on $\RR^{N}$]{On type-$\II$ singularities in Ricci flow on $\RR^{N}$}
\author{Haotian Wu}
\address{Department of Mathematics, University of Oregon, Eugene, OR 97403, USA}
\keywords{Ricci flow; Type-$\II$ singularity; Asymptotics; the Bryant soliton.}
\subjclass[2010]{53C44 (primary), 35K59 (secondary)}
\email{hwu@uoregon.edu}
\date{\usdate\today}


\begin{abstract}
In each dimension $N\geq 3$ and for each real number $\lambda\geq 1$, we construct a family of complete rotationally symmetric solutions to Ricci flow on $\RR^{N}$ which encounter a global singularity at a finite time $T$. The singularity forms arbitrarily slowly with the curvature blowing up arbitrarily fast at the rate $(T-t)^{-(\lambda+1)}$. Near the origin, blow-ups of such a solution converge uniformly to the Bryant soliton. Near spatial infinity, blow-ups of such a solution converge uniformly to the shrinking cylinder soliton. As an application of this result, we prove that there exist standard solutions of Ricci flow on $\RR^N$ whose blow-ups near the origin converge uniformly to the Bryant soliton.
\end{abstract}

\maketitle

\section{Introduction}
An important phenomenon in Ricci flow is the formation of finite time singularities which occurs for a large family of initial metrics. Let $(M,g)$ be a complete Riemannian manifold and $g(t)$ be a solution to Ricci flow 
\begin{align*}
 \p_t g = -2\rc(g)
\end{align*}
for time $t\geq 0$. Suppose $g(t)$ becomes singular at time $T<\infty$, then this finite time singularity is called \emph{Type-I} if
\begin{align*}
 \sup\limits_{M\times[0,T)} \left\vert \Rm(\cdot, t) \right\vert (T-t) < \infty,
\end{align*}
and it is called \emph{Type-$\II$} if
\begin{align*}
 \sup\limits_{M\times[0,T)} \left\vert \Rm(\cdot, t) \right\vert (T-t) = \infty.
\end{align*}

The simplest example of a Type-I singularity in Ricci flow is the shrinking round sphere. In his seminal paper \cite{H82}, Hamilton proved that Ricci flow on a compact three-manifold with positive Ricci curvature develops a Type-I singularity and shrinks to a round point. The same is true for Ricci flow on a compact four-manifold with positive curvature operator \cite{H86}. By the works of Hamilton \cite{H88} and Chow \cite{BCh91}, Ricci flow on $S^2$ with an arbitrary initial metric always develops a Type-I singularity and shrinks to a round point. On a compact $N$-dimensional manifold with $N\geq 3$, B\"{o}hm and Wilking \cite{BW08} proved that Ricci flow starting at a metric with $2$-positive curvature operator (i.e., the sum of its two smallest eigenvalues is positive) develops a Type-I singularity and shrinks to a round point. We note that this result in dimension $N\leq 4$ was known earlier \cites{H82, H86, Ch91}. Brendle \cite{B08} generalized the result of \cite{BW08} under a much weaker assumption on the curvature operator. All these Type-I singularities are \emph{global} in the sense that the volume of a manifold shrinks to zero at time $T$.

In \cite{H95}, Hamilton sketched intuitively the formation of \emph{local} singularities under Ricci flow. By local we mean that a singularity forms on a compact subset of a manifold while the volume of the manifold remains positive at time $T$. Rigorous results on finite time local singularities in Ricci flow were obtained later. On a noncompact warped product $\RR\times_f S^n$ ($n\geq 2$), Simon \cite{S00} showed that there are Ricci flow solutions that encounter finite time local singularities. For local singularities in the K\"{a}hler-Ricci flow, the first examples were constructed on holomorphic line bundles over $\mathbb{CP}^{n-1}$ ($n\geq 2$) using $U(n)$-invariant gradient shrinking K\"{a}hler-Ricci solitons \cite{FIK03}.

Hamilton's examples of local singularities are the so-called \emph{neckpinch} singularities. To describe them precisely, we recall the blow-up technique in singularity analysis. We say that a sequence $\{(x_i, t_i) \}_{i=0}^\infty$ of points and times in a Ricci flow is a \emph{blow-up sequence} at time $T$ if $t_i\nearrow T$ and $\left\vert\Rm(x_i,t_i)\right\vert\nearrow\infty$ as $i\nearrow\infty$. A blow-up sequence has a \emph{pointed singularity model} if the sequence of parabolically dilated metrics
\begin{align*}
 g_i(x, t) : = \left\vert\Rm(x_i,t_i)\right\vert g \left(x, t_i + \left\vert\Rm(x_i,t_i)\right\vert^{-1}t\right)
\end{align*}
has a complete smooth limiting metric. A Ricci flow solution is said to develop a neckpinch singularity at time $T<\infty$ if there is some blow-up sequence at $T$ whose pointed singularity model exists and is given by the self-similarly shrinking Ricci soliton on the cylinder $\RR\times S^n$ ($n\geq 2$).

A neckpinch singularity is called \emph{nondegenerate} if every pointed singularity model of any blow-up sequence at $T$ is the shrinking cylinder soliton. The first rigorous examples of finite time neckpinch singularities in Ricci flow on a compact manifold were produced by Angenent and Knopf \cite{AK04}. They exhibited a class of rotationally symmetric metrics on $S^{N}$ ($N\geq 3$) which develop Type-I neckpinch singularities under Ricci flow. In a subsequent paper \cite{AK07}, the same authors obtained the precise asymptotics for such neckpinch singularities.

A neckpinch singularity is said to be \emph{degenerate} if there is at least one blow-up sequence at $T$ with a pointed singularity model that is not the shrinking cylinder soliton. A degenerate neckpinch is expected to be a Type-$\II$ singularity. In this paper, we construct rotationally symmetric Ricci flow solutions on $\RR^N$ ($N \geq 3$) that encounter a finite time Type-$\II$ singularity. These are the first examples of degenerate neckpinches on a complete noncompact manifold in dimension three or higher. Moreover, we describe the asymptotic behavior of the geometry of such a solution near the first singular time $T$. Before stating our main theorem, we first recount the existing results concerning Type-$\II$ singularities in Ricci flow.

Daskalopoulos and Hamilton \cite{DH04} showed that on $\RR^2$ there exist complete Ricci flow solutions that form Type-$\II$ singularities at the rate $(T-t)^{-2}$. Their proof is particular to dimension two, in which case the metric is conformal to the Euclidean metric by $g = u (dx^2+dy^2)$ and the conformal factor $u$ evolves by the logarithmic fast diffusion equation $\p_t u = \Delta\log u$. Assuming rotational symmetry and additional constraints, Daskalopoulos and del Pino \cite{DdP07} gave a precise description of the extinction profile of such a maximal solution on $\RR^2$: up to proper scaling, it must be a cigar soliton in an inner region, and a logarithmic cusp in an outer region. Daskalopoulos and \v{S}e\v{s}um \cite{DS10} proved the same result without assuming rotational symmetry. An extension of the results of \cites{DdP07, DS10} was obtained by Hui \cite{Hui12}. The formal asymptotics of the extinction profile were derived by King \cite{K93}.

In dimension three or higher, if one is willing to assume rotational symmetry of the metrics, then the Ricci flow system of equations is reduced to a parabolic PDE for a scalar function. Gu and Zhu \cite{GZh08} proved the existence of Type-$\II$ singularities in Ricci flow on $S^{N}$ ($N \geq 3$), although their work shed little light on the geometric details of such solutions. Garfinkle and Isenberg \cites{GI05, GI08} have conducted numerical investigations on the formation of Type-$\II$ singularities on $S^3$ modeled by degenerate neckpinches.

In their recent works \cites{AIK11, AIK12}, Angenent, Isenberg, and Knopf demonstrated the existence of rotationally symmetric Ricci flow solutions on $S^{N}$ ($N\geq 3$) that develop finite time Type-$\II$ degenerate neckpinches. Their solutions become singular at the rate $(T-t)^{-2+2/k}$ for each integer $k\geq 3$. Moreover, they were able to describe the asymptotic profiles of these solutions. The techniques in \cites{AIK11, AIK12} have been applied to the asymptotic analysis of the formation of singularities in other geometric flows. For example, Angenent and Vel\'{a}zquez \cite{AV95} studied the asymptotic shape of cusp singularities in the curve shortening flow. The same authors \cite{AV97} constructed solutions with degenerate neckpinches to the mean curvature flow.

In this paper, we consider rotationally symmetric Riemannian metrics on $\RR^N$ with $N\geq 3$. We first note that Ricci flow on $\RR^N$ can encounter a finite time singularity under suitable initial conditions. For example, take a metric on $S^{N}$ as constructed in \cite{AK04} and conformally open up the north pole of the sphere. This produces an initial geometry on $\RR^N$, which one expects to develop a finite time Type-I neckpinch singularity under Ricci flow. Similarly, one expects that there are Ricci flow solutions that form a finite time Type-$\II$ singularity on $\RR^N$. Indeed, this happens on $\RR^2$ \cite{DH04}.

We now state the main result of this paper.
\begin{thm}\label{thm1}
In each dimension $N\geq 3$, for each real number $\lambda\geq 1$ and any pair of positive constants $A_1^-, A_1^+$ \footnote{The constants $A_1^-, A_1^+$ are defined in Lemma \ref{int_bar}.} with $A_1^->A_1^+$, there exists an open (in $C^2$ topology) set $\CMcal{G}_N$ of complete rotationally symmetric metrics on $\RR^N$ such that the Ricci flow starting at each $g_0\in\CMcal{G}_N$ has a unique solution $g(t)$ for $t\in[0,T)$, $T<\infty$. The solution $g(t)$ develops a finite time global singularity at time $T$ with the following properties.
\begin{enumerate}
\item The singularity is Type-$\II$ with
\begin{align*}
 \frac{C^+}{(T-t)^{\lambda+1}} \leq \sup\limits_{x\in\RR^N} \left\vert \Rm(x,t)\right\vert \leq \frac{C^-}{(T-t)^{\lambda+1}}
\end{align*}
attained at the origin for some constants $C^{\pm} = C(N, A_1^{\pm})$.
\item If one rescales the solution so that the distance from the origin dilates at the rate $(T-t)^{-(\lambda+1)/2}$, then the metric converges uniformly on intervals of order $(T-t)^{(\lambda+1)/2}$ to the Bryant soliton.
\item If one rescales the solution at the parabolic rate $(T-t)^{-1/2}$, then the metric converges uniformly to the shrinking cylinder soliton near spatial infinity.
\item The solution exhibits the asymptotic behavior of the formal solution described in Section \ref{formal}.
\end{enumerate}
\end{thm}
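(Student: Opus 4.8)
The strategy follows the matched-asymptotics / barrier framework of Angenent--Isenberg--Knopf, adapted to the noncompact setting of $\RR^{n+1}$. Write a rotationally symmetric metric on $\RR^{n+1}$ as $g = \varphi(x,t)^2\, dx^2 + \psi(x,t)^2\, g_{S^n}$ for $x \in [0,\infty)$, where $\psi$ is the radius of the spherical fibers. In the arclength gauge $ds = \varphi\, dx$, the Ricci flow reduces to a scalar parabolic equation for $\psi = \psi(s,t)$,
\begin{align*}
 \psi_t = \psi_{ss} - (n-1)\frac{1 - \psi_s^2}{\psi},
\end{align*}
coupled with the gauge evolution. The quantities that organize the asymptotics are the sectional curvatures $K_0 = -\psi_{ss}/\psi$ (of planes containing $\partial_s$) and $K_1 = (1 - \psi_s^2)/\psi^2$ (of planes tangent to the sphere). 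First I would identify, in Section~\ref{formal}, the formal solution: a matched construction with an inner region near the origin where, after rescaling lengths by $(T-t)^{-\lambda}$ and $\psi$ accordingly, the solution limits to the Bryant steady soliton (the rotationally symmetric steady soliton on $\RR^{n+1}$, which here plays the role of the ``tip'' model); an intermediate parabolic region where $\psi \sim \sqrt{2(n-1)(T-t)}$ reproduces the shrinking cylinder $\RR \times S^n$; and an outer region that stays essentially static. The exponent $2\lambda$ in the curvature blow-up rate is forced by demanding that the Bryant soliton scale $(T-t)^{\lambda}$ match consistently across the transition — this is the analogue of the $k$-dependence in \cite{AIK11,AIK12}, but now with a continuous parameter $\lambda \geq 1$ because the noncompact geometry removes the quantization coming from a closed profile.

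The rigorous existence argument is a sub/supersolution (barrier) construction together with a fixed-point or continuity argument on the open set $\CMcal{G}_{n+1}$ of admissible initial data. Concretely, I would: (i) define $\CMcal{G}_{n+1}$ by finitely many open conditions on the profile $\psi_0$ and its derivatives — positivity, a single interior ``bump'' so that $\psi_s$ changes sign in a controlled way, prescribed asymptotics as $x \to \infty$ ensuring completeness and the static outer region, and sign conditions on $K_0, K_1$ that are preserved by the flow; (ii) establish short-time existence, uniqueness, and the relevant maximum principles for the scalar equation (including at the origin $s=0$, where smoothness forces $\psi_s(0,t)=1$ and boundary-point versions of the maximum principle are needed); (iii) build explicit upper and lower barriers for $\psi$ in each of the three regions, glued along the overlaps, whose evolution traps the actual solution and pins down both the blow-up rate $(T-t)^{-2\lambda}$ of $|\Rm|$ at the origin and the convergence of the rescalings. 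The upper barrier in the tip region is a slight dilation of the Bryant soliton profile; the cylindrical region uses barriers of the form $\sqrt{2(n-1)(T-t)}\,(1 + o(1))$; and a Sturmian (zero-counting) argument controls the number and motion of the critical points of $\psi_s$, guaranteeing the singularity is global and of the stated type rather than a local neckpinch.

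The main obstacle, and where the bulk of the work lies, is the matching and the lower barrier near the tip. Unlike the compact case, one does not have a closed manifold to anchor global estimates, so the delicate point is to show the intermediate (cylindrical) region genuinely feeds into the Bryant-soliton tip at exactly the rate $(T-t)^{\lambda}$ and that nothing escapes to produce a faster or slower blow-up; this requires a careful analysis of the linearized operator about the cylinder soliton (its unstable/neutral modes, as in \cite{AIK11}) and a commuting-estimates argument to control higher derivatives uniformly across the overlap regions. A secondary technical point is verifying that the barrier construction can be carried out for a genuinely \emph{open} set of initial data — i.e.\ that the trapping conditions are stable under $C^k$-perturbation of $g_0$ — which is what makes $\CMcal{G}_{n+1}$ open and gives the theorem its robustness. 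Once the tip asymptotics and the cylindrical asymptotics are established with uniform estimates, conclusions (1)–(3) and the final assertion about matching the formal solution follow by translating the barrier bounds into the stated rescaled convergences.
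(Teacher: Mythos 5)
Your high-level framework (matched asymptotics plus barriers, with a Bryant-soliton tip) is the right one, but several structural choices in your plan would fail for this particular theorem. First, the region structure: you propose three regions including ``an outer region that stays essentially static,'' which is the geometry of a \emph{local} neckpinch on a compact manifold. The theorem asserts a \emph{global} singularity: the metric is asymptotic to a shrinking round cylinder at spatial infinity, and the entire manifold becomes singular at $T$ (indeed $T$ is determined solely by the initial cylinder radius at infinity, via comparison with the exact shrinking cylinder). A static outer region is incompatible with the conclusion. The paper's formal solution has only \emph{two} regions, interior (Bryant) and exterior (cylinder); the parabolic and outer regions of \cite{AIK11,AIK12} are pushed off to spatial infinity. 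Second, your proposed profile geometry is wrong: a ``single interior bump so that $\psi_s$ changes sign'' describes a neck between two caps. Here $\psi$ is strictly monotone in $s$, increasing from $0$ at the origin to the asymptotic cylinder radius; this monotonicity is what allows the paper to use $\psi$ (equivalently $u$) as the spatial coordinate and reduce Ricci flow to the purely local quasilinear equation for $z=\psi_s^2$. Consequently the Sturmian zero-counting argument you invoke has nothing to count and plays no role.

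Third, the mechanism producing the continuum of rates $\lambda\in[1,\infty)$ is not an analysis of the unstable/neutral modes of the linearized cylinder operator: that is exactly the compact AIK strategy and is what quantizes the rates to $(T-t)^{-2+2/k}$, $k\geq 3$. The paper instead starts at the tip and works outward, expanding the exterior solution as $z=\sum_{m\geq1}e^{-m\lambda\tau}Z_m(u)$ with $\lambda$ a free parameter, and matches $Z_1(u)=bu^{-2}(1-u^2)^{1+\lambda}$ to the Bryant tail $\mathfrak{B}(ar)\sim(ar)^{-2}$ at the interface $r\sim e^{\lambda\tau/2}u=O(1)$, forcing $b\approx a^{-2}$. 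Finally, the existence argument is not a fixed-point or Wa\.{z}ewski-type continuity argument: because the barriers here are global in space and the profile is monotone, a direct comparison principle for the $z$-equation (with the derivative bounds $|z_u/u|,|z_{uu}|\lesssim e^{\lambda\tau}$ on the barriers) traps every solution starting between $z^-(\cdot,\tau_0)$ and $z^+(\cdot,\tau_0)$, which is precisely what yields the open set $\CMcal{G}_{n+1}$ and is the point of contrast with the compact, unstable case. As written, your plan would reproduce a compact-type local degenerate neckpinch argument rather than prove this theorem.
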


\begin{remark}
The singular time $T$ is determined only by the initial radius of the asymptotic cylinder at spatial infinity. In terms of the rescaled time $\tau_0$ (cf. Proposition \ref{barriers}), $T=e^{-\tau_0}$.
\end{remark}

Our result is inspired by the works \cites{AIK11, AIK12}. To prove this theorem, we begin by constructing, for each choice of $N\geq 3$ and $\lambda\geq 1$, a formal solution to Ricci flow on $\RR^N$ with the curvature blow-up rate of $(T-t)^{-(\lambda+1)}$ near the origin and that of $(T-t)^{-1}$ near spatial infinity. Using this formal solution, we then construct upper and lower barriers to the Ricci flow PDE, for which a comparison principle is also proved. Before the first singular time $T$, the curvatures are bounded and so the Ricci flow solution exists and is unique \cites{Shi89-1, ChZh06}. Thus, for any initial data between the barriers, we obtain unique complete solutions to Ricci flow whose asymptotic properties, by the comparison principle, are the same as those of the formal solution. 

Theorem \ref{thm1} is interesting in several aspects. First of all, this shows that Type-$\II$ singularities in Ricci flow on $\RR^N$ can occur arbitrarily slowly with curvatures blowing up at arbitrarily fast rate, complementing the existing works \cites{AIK11, AIK12, DH04}. The $\lambda=1$ case in our theorem can be viewed as a higher dimensional analogue of the result (restricted to the rotationally symmetric solutions) of \cite{DH04}. The asymptotics in Theorem \ref{thm1} can be compared to those in \cites{DdP07, DS10}. Secondly, solutions in \cite{AIK12} become singular at the set of discrete rates $(T-t)^{-2+2/k}$, where $k\in\NN$ and $k\geq 3$. In contrast, the curvature blow-up rates of the solutions in Theorem \ref{thm1} form a continuum since $\lambda\in[1,\infty)$. In particular, the $\lambda=1$ case can be thought of as the limiting case of the Main Theorem in \cite{AIK12} as $k\nearrow\infty$. Thirdly, the analysis in \cites{GI05, GI08, GZh08} suggests that the formation of Type-$\II$ singularities on a compact manifold is an unstable property. This is reflected in that the proof in \cite{AIK12} uses the somewhat indirect Wa\.{z}ewski retraction method. In contrast, we use a comparison principle to give a direct proof of Theorem \ref{thm1}. So one may regard the formation of Type-$\II$ singularities on a noncompact manifold to be a stable property.

Our solutions near the origin are modeled by the Bryant soliton. This is reasonable because blow-ups of Ricci flow singularities are expected, and in many cases proved, to be Ricci solitons. On $\RR^N$ ($N\geq 3$), the Bryant soliton is a rotationally symmetric gradient steady Ricci soliton with positive curvature operator \cites{Bryant, ChLN06}. The uniqueness (up to homothetic scaling) of the Bryant soliton is an interesting question. In dimension three, Bryant \cite{Bryant} showed that there are no other rotationally symmetric steady Ricci solitons; other non-rotationally symmetric solitons exist in higher dimensions \cite{I94}. Perelman \cite{P02} asked if any three-dimensional steady Ricci soliton is necessarily rotationally symmetric. Brendle \cite{B13} recently answered this question affirmatively, thereby establishing the uniqueness of the Bryant soliton in dimension three, under a non-collapsing assumption. Brendle \cite{B12-2} also proved a higher dimensional version of his theorem. Other uniqueness result for the Bryant soliton holds under an additional assumption such as local conformal flatness \cite{CCh12}, or suitable asymptotics near spatial infinity \cite{B11}, or half conformal flatness \cite{ChW11}.

In \cite{P03-1}, Perelman described a special family of Ricci flow solutions on $\RR^3$, the so-called \emph{standard solutions}, which are complete rotationally symmetric metrics asymptotic to a round cylinder at spatial infinity. The standard solutions are used to construct long-time solutions and to study Ricci flow with surgery \cites{P03-1, KL08}. Our next result shows that a subset of the Ricci flow solutions in Theorem \ref{thm1} are in fact standard solutions in the sense defined in \cite{LT}. We refer the reader to Section \ref{stansolsect} for the precise definition of a standard solution.

\begin{thm}\label{thm2}
Let $\CMcal{G}_N$ be given as in Theorem \ref{thm1}. There exists an open (in $C^6$ topology) set $\CMcal{G}_N^\ast \subset \CMcal{G}_N$ such that the Ricci flow starting at each $g_0\in\CMcal{G}_N^\ast$ has a unique standard solution $g(t)$ on $\RR^N$ for $t\in[0,T)$, $T<\infty$. Moreover, the solution $g(t)$ satisfies all the properties described in Theorem \ref{thm1}.
\end{thm}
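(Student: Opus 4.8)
The plan is to carve out of $\CMcal{G}_{n+1}$ the metrics whose Ricci flows additionally satisfy the defining conditions of a standard solution, and then to check that those conditions are preserved along the flow. Recall from Section~\ref{stansolsect} that, following \cite{P03, LT}, a Ricci flow $g(t)$ on $\RR^{n+1}$ is a \emph{standard solution} when $g_0$ is complete, rotationally symmetric, has bounded nonnegative curvature operator, and is asymptotic at spatial infinity to a fixed round cylinder $\RR\times S^n$. Every $g_0\in\CMcal{G}_{n+1}$ is already complete and rotationally symmetric; by item~(3) of Theorem~\ref{thm1} the flow is asymptotic near spatial infinity to the shrinking cylinder soliton; and since $T$ is the first singular time, $g(t)$ has bounded curvature on $\RR^{n+1}\times[0,t]$ for every $t<T$ (cf.\ \cite{Shi89-1, CZ06}). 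Hence the only extra requirement is nonnegativity of the curvature operator, and I would set
\[
 \CMcal{G}_{n+1}^\ast:=\bigl\{\,g_0\in\CMcal{G}_{n+1}\ :\ g_0\ \text{has nonnegative curvature operator}\,\bigr\}.
\]

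The first step is to show $\CMcal{G}_{n+1}^\ast$ is nonempty and open. For a rotationally symmetric metric $g=ds^2+\psi(s)^2 g_{S^n}$ on $\RR^{n+1}$ (arc-length parametrization), the curvature operator is diagonal with eigenvalues $K_0=-\psi_{ss}/\psi$, the sectional curvature of $2$-planes containing $\p_s$ (multiplicity $n$), and $K_1=(1-\psi_s^2)/\psi^2$, the sectional curvature of $2$-planes tangent to the spheres (multiplicity $\binom{n}{2}$); thus $\Rm(g)\geq 0$ if and only if $K_0\geq 0$ and $K_1\geq 0$. The $t=0$ slice of the formal solution of Section~\ref{formal} is modeled on the Bryant soliton near the origin, where $\Rm>0$ \cite{Bryant, CLN06}, and on the shrinking cylinder near spatial infinity, where $K_0\to 0^+$ and $K_1$ stays bounded below by a positive constant; inspecting the formal profile one checks $K_0>0$ and $K_1>0$ on all of $\RR^{n+1}$. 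Since this initial slice lies between the barriers of Theorem~\ref{thm1}, any smooth complete rotationally symmetric metric in $\CMcal{G}_{n+1}$ sufficiently $C^2$-close to it (in the rescaled variables used in the construction) still has nonnegative---indeed positive---curvature operator, so $\CMcal{G}_{n+1}^\ast\neq\varnothing$. Openness holds because membership in $\CMcal{G}_{n+1}$ is a barrier condition while the sign conditions $K_0\geq 0$, $K_1\geq 0$ depend only on $\psi,\psi_s,\psi_{ss}$, so both are stable under small perturbations in the relevant weighted $C^2$-topology.

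The second step is to propagate the three conditions along the flow. Rotational symmetry persists by uniqueness of the solution (Theorem~\ref{thm1}), and item~(3) of Theorem~\ref{thm1} supplies the cylindrical asymptotics for all $t<T$. For the curvature operator: $g(t)$ has bounded curvature on $[0,t]$ for each $t<T$ and its ends are uniformly modeled on the shrinking cylinder, which has $\Rm\geq 0$; hence Hamilton's maximum principle for the curvature operator on complete noncompact manifolds of bounded curvature \cite{H86, CZ06} shows that $\Rm(g(t))\geq 0$ is preserved. (In the rotationally symmetric setting this also follows directly from the reaction--diffusion equations satisfied by $K_0$ and $K_1$, using the cylinder values as barriers at spatial infinity, exactly as in \cite{AK04, AIK11}.) Therefore, for every $g_0\in\CMcal{G}_{n+1}^\ast$, the solution $g(t)$ of Theorem~\ref{thm1} is a standard solution in the sense of \cite{LT}; by the uniqueness of standard solutions \cite{P03, LT} it is the only one with initial metric $g_0$, and, being the solution of Theorem~\ref{thm1}, it satisfies all the properties listed there. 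This establishes Theorem~\ref{thm2}.

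I expect the main obstacle to be the nonemptiness part of the first step: one must extract enough quantitative $C^2$-control on admissible initial data from the barrier construction of Theorem~\ref{thm1}, and verify by explicit computation that the formal profile---interpolating between a Bryant-type tip and a cylindrical end---has $K_0\geq 0$ and $K_1\geq 0$ throughout, with these inequalities stable under the admissible perturbations. Once that is in hand, the propagation of the conditions along the flow and the appeal to uniqueness of standard solutions are comparatively routine.
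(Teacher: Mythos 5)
Your core idea---carving $\CMcal{G}_{n+1}^\ast$ out of $\CMcal{G}_{n+1}$ by the curvature sign condition and verifying nonemptiness/openness by inspecting the patched Bryant--cylinder profile---is the same as the paper's (Lemma \ref{stansol} checks $K>0$, $L\geq0$ on the explicit $\hat z_0$ and then smooths and perturbs). However, your proposal has two genuine gaps relative to the definition of a standard solution actually used in Section \ref{stansolsect}. First, condition (P2) requires boundedness not only of $|\Rm_{g_0}|$ but of $|\nabla^i\Rm_{g_0}|$ for $i=1,\dots,4$; you never address the derivative bounds, and they do not follow from short-time existence or from $g_0$ lying between the barriers without further work. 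The paper verifies them by an explicit computation: from $\p u/\p s\lesssim\sqrt{z^+_{\text{ext}}}\lesssim(1-u^2)^{(\lambda+1)/2}$ one gets decay of $\p_s^i K$ and $\p_s^i L$ near spatial infinity, and compactness handles the rest. Second, your verification of the cylindrical asymptotics (P3) appeals to item (3) of Theorem \ref{thm1}, but that item concerns parabolic blow-ups near the singular time $T$, not the spatial asymptotics of the \emph{initial} metric; it cannot deliver the pointed $C^3$ Cheeger--Gromov convergence of $(\RR^{n+1},g_0,y_k)$ to a fixed cylinder $\RR\times S^n(r_0)$. Again the paper proves this directly from the barrier asymptotics, showing $\psi_s,\psi_{ss},\psi_{sss}\to0$ as $s\to\infty$ at $\tau=\tau_0$.

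A smaller point: in the paper's convention (following \cite{LT}) a standard solution is a Ricci flow whose \emph{initial condition} satisfies (P1)--(P3), so your second step---propagating $\Rm\geq0$ along the flow via Hamilton's maximum principle on complete noncompact manifolds---is not needed for Theorem \ref{thm2}. It is not wrong, but it adds an appeal to a nontrivial noncompact maximum principle that the paper avoids entirely; once $\CMcal{G}_{n+1}^\ast$ is shown nonempty and open, the theorem follows immediately from uniqueness of standard solutions \cite{LT, CZ06} together with Theorem \ref{thm1}. Also note that (P1) asks for $\Rm_{g_0}>0$ at some point, not merely $\Rm_{g_0}\geq0$, so your definition of $\CMcal{G}_{n+1}^\ast$ should include that strictness (the paper gets it at the origin from the Bryant profile).
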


Bennett Chow and Gang Tian have conjectured \cite{Lu} the following.
\begin{conj}[Chow-Tian]
Sequences of appropriately scaled standard solutions (as defined in \cite{LT}) with marked origins converge to the Bryant soliton in a suitable sense.
\end{conj}

A corollary of Theorem \ref{thm2} gives evidence in favor of the Chow-Tian conjecture.
\begin{cor}\label{cor1}
In dimension $N\geq 3$, there exist standard solutions to Ricci flow whose blow-ups near the origin converge uniformly to the Bryant soliton.
\end{cor}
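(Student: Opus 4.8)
The plan is to obtain this at once by combining Theorems \ref{thm1} and \ref{thm2}. By Theorem \ref{thm2}, in each dimension $n+1\geq 3$ there is a nonempty open set $\CMcal{G}_{n+1}^\ast\subset\CMcal{G}_{n+1}$ such that Ricci flow starting at any $g_0\in\CMcal{G}_{n+1}^\ast$ has a unique \emph{standard} solution $g(t)$ on $[0,T)$ with $T<\infty$. So I would first fix any such initial metric $g_0\in\CMcal{G}_{n+1}^\ast$; the resulting solution $g(t)$ is then a standard solution to Ricci flow on $\RR^{n+1}$.

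Since $\CMcal{G}_{n+1}^\ast\subset\CMcal{G}_{n+1}$, Theorem \ref{thm2} further asserts that $g(t)$ enjoys all the properties listed in Theorem \ref{thm1}. Applying property (2) of Theorem \ref{thm1} to $g(t)$: after rescaling so that the distance from the origin dilates at the rate $(T-t)^{-\lambda}$, the metric converges uniformly on intervals of order $(T-t)^{\lambda}$ to the Bryant soliton. This is precisely the assertion that the blow-ups of $g(t)$ near the origin converge uniformly (in the sense of part (2) of Theorem \ref{thm1}) to the Bryant soliton, which is what the corollary claims.

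In other words, there is essentially nothing to prove once Theorems \ref{thm1} and \ref{thm2} are in hand; the corollary is a direct repackaging. The only point I would be careful to record explicitly is that the open set $\CMcal{G}_{n+1}^\ast$ produced in Theorem \ref{thm2} is genuinely nonempty --- equivalently, that the barrier construction underlying Theorem \ref{thm1} admits at least one admissible initial datum that also satisfies the standard-solution conditions of Section \ref{stansolsect} --- after which one simply invokes Theorem \ref{thm1}(2). So I do not expect a real obstacle here; the substantive content lives entirely in the earlier theorems, and the corollary merely extracts the geometric conclusion about blow-ups near the origin in the special case of standard solutions.
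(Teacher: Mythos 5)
Your proposal is correct and matches the paper's own argument: the paper likewise deduces the corollary immediately by noting (via Lemma \ref{stansol}, the content of Theorem \ref{thm2}) that solutions starting in the nonempty open set $\CMcal{G}_{n+1}^\ast\subset\CMcal{G}_{n+1}$ are standard solutions, and then invoking part (2) of Theorem \ref{thm1}. Your remark that the only substantive point is the nonemptiness of $\CMcal{G}_{n+1}^\ast$ is exactly what Lemma \ref{stansol} supplies.
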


This paper is organized as follows. In Section 2, we describe the basic set-up and write down the parabolic PDE for the Ricci flow assuming rotational symmetry. In Section 3, we construct the formal solution with matched asymptotics. The formal solution is then used to construct the subsolution and supersolution to the PDE in Section 4. In Section 5, we build lower and upper barriers to the PDE from the subsolution and supersolution, respectively. In Section 6, we prove a comparison principle for the PDE and give the proof of Theorem \ref{thm1}. In Section 7, we relate the solutions we construct to the standard solutions and prove Theorem \ref{thm2}.


\subsection*{Acknowledgments} This work is part of my Ph.D. thesis at the University of Texas at
Austin. I sincerely thank my adviser Prof. Dan Knopf for his mentorship and support, without which this work would not have been possible. As usual, I appreciate Dan's good humor. I also thank the referee for the meticulous review of and many constructive comments on the manuscript.


\section{Preliminaries}
From now on, we set $n:=N-1\geq 2$. We puncture $\RR^N$ at the origin and identify the remaining manifold with $(0,\infty)\times S^{n}$. For $x\in(0,\infty)$, we define a warped product metric
\begin{align*}
 g := g(t,x) = \phv^2(t,x) dx^2 + \psi^2(t,x) g_{\text{sph}},
\end{align*}
where $g_{\text{sph}}$ is the metric of constant sectional curvature one on $S^n$.

The distance $s$ to the origin is
\begin{align*}
 s(t,x):=\int_0^x \phv(t,y)dy.
\end{align*}
In the $s$-coordinate, the metric becomes
\begin{align}\label{eq:g(s)}
 g = ds^2 + \psi^2\left(s, t\right)g_{\text{sph}}.
\end{align}
Extending the metric $g$ to a smooth complete rotationally symmetric metric, which we still denote by $g$, on $\RR^N$, then $\psi$ necessarily satisfy the boundary conditions
\begin{align*}
 \lim\limits_{x\searrow 0} \psi = 0, \quad \quad \lim\limits_{x\searrow 0} \psi_s = 1.
\end{align*}

We use the notation $\left.\p_t\right\vert_{\cdot}$ for taking the time derivative while keeping the quantity ``$\cdot$'' fixed. Then (cf. \cite{AIK12}*{Section 2.1})
\begin{align*}
 \left[\left.\p_t\right\vert_x, \p_s \right] & = -n\frac{\psi_{ss}}{\psi}\p_s.
\end{align*}
In the $s$-coordinate, the Ricci flow system is reduced to the parabolic equation for $\psi$:
\begin{align}\label{eq:psi}
 \left.\p_t\right\vert_x \psi & = \psi_{ss}-(n-1)\frac{1-\psi_s^2}{\psi}.
\end{align}
The function $\phv$, which is suppressed in the $s$-coordinate, evolves under Ricci flow by
\begin{align*}
 \left.\p_t\right\vert_x \log\phv & = n\frac{\psi_{ss}}{\psi}.
\end{align*}

Let $K$ be the sectional curvature of a two-plane orthogonal to the sphere $\{x\}\times S^n$ and $L$ be the sectional curvature of a tangential two-plane. Then
\begin{align*}
 K &= -\frac{\psi_{ss}}{\psi}, \quad L = \frac{1-\psi_s^2}{\psi^2}.
\end{align*}
In particular, $\left\vert \Rm\right\vert^2 = 2n K^2 + n(n-1) L^2$.

Since the metric is smooth and $\lim\limits_{x\searrow 0} \psi_s = 1$, we must have $\psi_s>0$ in a neighborhood of the origin. So we can use $\psi$ as a new coordinate near the origin, writing
\begin{align}\label{eq:g(z)}
 g = z(\psi, t)^{-1} d\psi^2 + \psi^2 g_{\text{sph}},
\end{align}
where $z\left(\psi,t \right) := \psi_s^2$. Then the sectional curvatures are given by
\begin{align*}
 K &= -\frac{z_\psi}{2\psi}, \quad L = \frac{1-z}{\psi^2}.
\end{align*}

Under Ricci flow, the metric \eqref{eq:g(z)} evolves by (see \cite{AIK12}*{Section 2.2})
\begin{align}\label{eq:RFpsi}
\left.\p_t\right\vert_\psi z = \mathcal{E}_{\psi}[z],
\end{align}
where $\mathcal{E}_{\psi}$ is the purely local quasilinear operator
\begin{align*}
 \mathcal{E}_{\psi}[z] & := zz_{\psi\psi}-\frac{1}{2}z_{\psi}^2+(n-1-z)\frac{z_\psi}{\psi}+2(n-1)\frac{(1-z)z}{\psi^2}.
\end{align*}
We can split $\mathcal{E}_{\psi}$ into a linear and a quadratic term:
\begin{align*}
 \mathcal{E}_\psi[z] = \mathcal{L}_\psi[z] + \mathcal{Q}_\psi[z],
\end{align*}
where
\begin{align}
 \mathcal{L}_\psi[z] &:= (n-1)\left(\frac{z_\psi}{\psi} + 2\frac{z}{\psi^2}  \right), \label{eq:L}\\ 
 \mathcal{Q}_\psi[z] &:= z z_{\psi\psi} - \frac{1}{2} z_\psi^2 - \frac{z z_\psi}{\psi} - 2(n-1)\frac{z^2}{\psi^2}. \label{eq:Q}
\end{align}
The quadratic part defines a symmetric bilinear operator
\begin{align}\label{eq:hatQ}
\hat{\mathcal{Q}}_{\psi}[z_1,z_2] & :=  \frac{1}{2}\left[ z_1 (z_2)_{\psi\psi} + z_2 (z_1)_{\psi\psi} - (z_1)_\psi(z_2)_\psi\right] - \frac{z_1 (z_2)_\psi + z_2 (z_1)_\psi}{2\psi} - 2(n-1)\frac{z_1 z_2}{\psi^2}. \notag
\end{align}
In particular, $\mathcal{Q}_\psi[z]=\hat{\mathcal{Q}}_\psi[z,z]$.

Throughout this paper, we use $C_k$ ($k\in\NN$) to denote a positive constant that may change from line to line. The expression ``$f\lesssim g$'' means $f\leq C_k g$ for some constant $C_k$.


\section{The formal solution}\label{formal}
We first briefly review the formal solution (i.e., approximate solution) in \cites{AIK11, AIK12}. Introducing the coordinates consistent with a parabolic cylindrical blow-up:
\begin{align}
 u:=\frac{\psi}{\sqrt{2(n-1)(T-t)}},\quad \sigma:=\frac{s}{\sqrt{T-t}},\quad \tau:=-\log(T-t),
\end{align}
then in these coordinates, equation \eqref{eq:psi} becomes
\begin{align}\label{eq:RF_sigma}
 \left.\p_\tau\right\vert_\sigma u = u_{\sigma\sigma} - \left(\frac{\sigma}{2} + nI[u]\right)u_{\sigma}+\frac{u-u^{-1}}{2}+(n-1)\frac{u^2_\sigma}{u},
\end{align}
where
\begin{align*}
 I[u](\sigma, \tau) := \int_0^\sigma \frac{u_{\hat\sigma\hat\sigma}(\hat\sigma,\tau)}{u(\hat\sigma,\tau)} d\hat\sigma.
\end{align*}
For bounded $\sigma$, the solution to equation \eqref{eq:RF_sigma} is approximated by
\begin{align*}
 u \approx 1 + \sum\limits_{m=0}^\infty a_m e^{(1-m/2)\tau} h_m(\sigma),
\end{align*}
where $h_m$ is the $m$-th Hermite polynomial. In \cites{AIK11, AIK12}, the authors assume a nondegenerate neckpinch occurs at the equator of $S^{N}$ in such a way that the Ricci flow solution \emph{does not} approach a cylinder too quickly. So the term with $m=k$ is dominant for some specified $k\geq 3$. They then construct a formal solution using matched asymptotics in four connected regions: the outer, parabolic, intermediate, and tip regions. Their construction starts in the parabolic region which models a nondegenerate neckpinch near the equator of $S^{N}$, and ends in the tip region which models a degenerate neckpinch at one of the poles of $S^{N}$.

In this paper, we are interested in solutions that approach a cylinder ``quickly''. This leads us to the following construction. We first build a model for a degenerate neckpinch near the origin of $\RR^N$, we then work our way out to the rest of the manifold. We will see that our formal solution is defined in two connected regions: the \emph{interior} and \emph{exterior} regions. It turns out that, cf. the proof of Lemma \ref{complete}, these two regions are enough to define a complete metric on $\RR^N$. One may compare this to the compact case and conjecture that in the noncompact case the parabolic and outer regions are pushed to spatial infinity.


\subsection{Approximate solution in the interior region}
In the interior region, which will be defined in \eqref{eq:def_int}, we expect $u$ to be small and introduce the variable
\begin{align*}
 r := e^{\gamma\tau} u ,
\end{align*}
where $\gamma>0$ is a constant to be specified.

In the $u$-coordinate, by the change of variable formulae
\begin{align*}
 \left.\p_t\right\vert_\psi z  & = \left\{\left. \p_{\tau}\right\vert_u z + z_{u}\left(\left.\p_\tau\right\vert_{\psi}u\right) \right\}\frac{d\tau}{d t}=\left(\left.\p_{\tau}\right\vert_u z+\frac{1}{2}\psi z_{\psi} \right)e^{\tau},\\
 \mathcal{E}_{\psi}[z] & = \frac{1}{2(n-1)}e^{\tau} \mathcal{E}_{u}[z],
\end{align*}
equation \eqref{eq:RFpsi} becomes
\begin{align}\label{eq:RFzu}
 \left.\p_\tau\right\vert_u z &=\frac{1}{2(n-1)}\mathcal{E}_u[z]-\frac{1}{2}u z_u,
\end{align}
where we have used $\psi z_{\psi} = u z_u$.

In the $r$-coordinate, since
\begin{align*}
 \left.\p_\tau\right\vert_r z  & = \left.\p_\tau\right\vert_u z  + z_u\left(\left.\p_{\tau}\right\vert_{r} u \right) = \left.\p_\tau\right\vert_u z - \gamma u z_u ,\\ 
 \mathcal{E}_{u}[z] & = e^{2\gamma\tau} \mathcal{E}_{r}[z],\\
\end{align*}
equation \eqref{eq:RFzu} becomes, after using $u z_u = r z_r$,
\begin{align*}
 \mathcal{T}_r[z]=0,
\end{align*}
where
\begin{align}\label{eq:T_r}
 \mathcal{T}_r[z] := e^{-2\gamma \tau}\left\{\left.\p_\tau\right\vert_r z + \left(\frac{1}{2}+\gamma\right)r z_r\right\} - \frac{1}{2(n-1)}\mathcal{E}_r[z].
\end{align}
For sufficiently large $\tau$, the term involving $e^{-2\gamma\tau}$ is negligible and the equation $\mathcal{T}_r[z]=0$ is approximated by the equation
\begin{align*}
 \mathcal{E}_r[\tilde z]=0,
\end{align*}
whose solution, subject to the boundary conditions $\tilde z(0)=1$ and $\tilde z(\infty)=0$, is a Bryant soliton profile function
\begin{align*}
 \tilde z(r) = \mathfrak{B}(ar),
\end{align*}
where $a>0$ is an arbitrary constant. Each member of the one-parameter family of complete smooth metrics given by
\begin{align*}
 g = \mathfrak{B}^{-1}(ar) dr^2 + r^2 g_{\text{sph}}
\end{align*}
is a scaled version of the Bryant soliton.

The function $ \mathfrak{B}(r)$ is smooth and strictly monotone decreasing for all $r>0$. Near $r=0$, $ \mathfrak{B}(r)$ has the asymptotic expansion
\begin{align}\label{eq:Br0}
 \mathfrak{B}(r) & = 1 - b_2 r^2 + b_3 r^4 +b_3 r^6 + \cdots\quad\text{as } r\searrow 0,
\end{align}
where $b_k$'s are constants determined by $b_2$. In particular, $b_2>0$. Near $r=\infty$, $\mathfrak{B}(r)$ has the asymptotic expansion
\begin{align}\label{eq:Bri}
\mathfrak{B}(r) & = c_1r^{-2} + c_2 r^{-4} + c_3 r^{-6} + \cdots\quad\text{as } r\nearrow \infty,
\end{align}
where $c_k$'s are constants determined by $c_1$. In this paper, we normalize $\mathfrak{B}(r)$ by setting $c_1=1$ (so $b_2$ is also fixed). For more information on $\mathfrak{B}(r)$, we refer the reader to \cite{AIK11}*{Appendix B}.

We refine the approximate solution by considering an expansion of the form
\begin{align}
 z & \approx \mathfrak{B}(ar) + e^{-\tilde\gamma\tau}\beta_1(r)+e^{-\tilde\gamma\tau}\beta_2(r)+\cdots
\end{align}
for some $\tilde\gamma>0$, then
\begin{align*}
 z \approx a^{-2}r^{-2}\quad\text{as } r\nearrow\infty \text{ for } \tau \text{ large},
\end{align*}
which, in terms of the $u$-coordinate, is
\begin{align*}
 z \approx a^{-2}e^{-2\gamma\tau}u^{-2}\quad \text{as }\tau\nearrow\infty \text{ for } u \text{ small}.
\end{align*}
In Lemma \ref{z_int}, we will use $\mathfrak{B}(ar)\pm e^{-\tilde\gamma\tau}\beta_1(r)$ with $\tilde\gamma=\lambda$ to construct subsolution and supersolution in the interior region.


\subsection{Approximate solution in the exterior region}
We expect the exterior region, which is to be specified in \eqref{eq:def_ext}, to be a time-dependent subset of the neighborhood of the origin where $1>z>0$ and $0<u<1$. In this region, $z$ evolves by equation \eqref{eq:RFzu}, i.e.,
\begin{align*}
 \left.\p_\tau\right\vert_u z &=\frac{1}{2(n-1)}\mathcal{E}_u[z]-\frac{1}{2}u z_u.
\end{align*}
To construct an approximate solution to this equation, we use the series ansatz
\begin{align}\label{eq:series}
 z & =  \sum\limits_{m = 1}^{\infty} e^{-m\lambda\tau} Z_m(u),
\end{align}
where $\lambda>0$ is a constant to be chosen. We substitute \eqref{eq:series} into the PDE above and split $\mathcal{E}_u[z]$ into the linear and quadratic parts as given in \eqref{eq:L} and \eqref{eq:Q} respectively. By comparing the coefficients of $e^{-m\lambda\tau}$ in the resulting equation, we find $Z_m$ must satisfy the first order ODE
\begin{align}\label{eq:ODE_Z}
 \frac{1}{2}\left(u^{-1}-u\right)\frac{d Z_m}{du} + \left(u^{-2}+m\lambda\right)Z_m & = -\frac{1}{2(n-1)}\sum\limits_{i=1}^{m-1}\hat{\mathcal{Q}}_u[Z_i,Z_{m-i}].
\end{align}

When $m=1$, equation \eqref{eq:ODE_Z} is a linear homogeneous equation
\begin{align}\label{eq:ODE_Z1}
\frac{1}{2} \left(u^{-1}-u \right)\frac{d Z_1}{du} + \left(u^{-2}+\lambda\right)Z_1 & = 0,
\end{align}
whose solutions are
\begin{align}
 Z_1(u) = b u^{-2}\left(1-u^2\right)^{1+\lambda},
\end{align}
where $b$ is an arbitrary constant that should satisfy a matching condition, cf. Subsection \ref{matching} and Lemma \ref{patch}.

When $m=2$, equation \eqref{eq:ODE_Z} becomes
\begin{align}\label{eq:ODE_Z2}
 \frac{1}{2}\left(u^{-1}-u\right)\frac{d Z_2}{du} + \left(u^{-2}+2\lambda\right)Z_2 & = \mathcal{Q}_u[Z_1],
\end{align}
where
\begin{align}\label{eq:QuZ1}
\mathcal{Q}_u[Z_1]  = 2b^2u^{-6}\left(1-u^2\right)^{2\lambda}\left\{ 4 - n\left(1-u^2\right)^2 + 2u^2\left(\lambda-3\right)+ u^4\left(\lambda-1\right)^2 \right\}.
\end{align}
The solutions of equation \eqref{eq:ODE_Z2} are 
\begin{align*}
 Z_2(u) &= u^{-2}\left(1-u^2\right)^{1+2\lambda} f(u),
\end{align*}
where
\begin{align*}
f(u) & = C_1 -2b^2\left(\frac{4-n}{u^2}  - \frac{\lambda^2 - 1}{1-u^2}\right) - 4(1+\lambda)b^2 \left\{\log\left(1-u^2\right) - 2\log u \right\}
\end{align*}
for an arbitrary constant $C_1$.

By direction computation, we have the following.
\begin{lemma}
If $\lambda\geq 1$, then
\begin{align*}
 \lim\limits_{u\nearrow 1}\left\vert \frac{Z_2(u)}{Z_1(u)}\right\vert = 0.
\end{align*}
\end{lemma}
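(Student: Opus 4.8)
The plan is to compute the leading-order behavior of both $Z_1$ and $Z_2$ as $u \nearrow 1$ and compare. Since both formulas carry the common factor $u^{-2}$, which tends to $1$, the essential comparison reduces to the powers of $(1-u^2)$ together with the logarithmic terms hidden in $f(u)$. First I would record that
\begin{align*}
 Z_1(u) = b\, u^{-2}(1-u^2)^{1+\lambda}, \qquad
 Z_2(u) = u^{-2}(1-u^2)^{1+2\lambda} f(u),
\end{align*}
so that
\begin{align*}
 \frac{Z_2(u)}{Z_1(u)} = \frac{1}{b}\,(1-u^2)^{\lambda}\, f(u).
\end{align*}
Thus everything comes down to showing $(1-u^2)^\lambda f(u) \to 0$ as $u \nearrow 1$.

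Next I would examine $f(u)$ term by term as $u \nearrow 1$. The constant $C_1$ and the $2\log u$ piece stay bounded. The term $(4-n)/u^2$ is bounded. The genuinely singular contributions are $\dfrac{\lambda^2-1}{1-u^2}$ and $\log(1-u^2)$. Hence $f(u) = O\!\big((1-u^2)^{-1}\big)$ when $\lambda > 1$, and $f(u) = O\!\big(|\log(1-u^2)|\big)$ when $\lambda = 1$ (since then $\lambda^2 - 1 = 0$ and the worst term is logarithmic). In the first case,
\begin{align*}
 (1-u^2)^{\lambda} f(u) = O\!\big((1-u^2)^{\lambda - 1}\big) \longrightarrow 0
\end{align*}
because $\lambda - 1 > 0$. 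In the second case $\lambda = 1$, we get $(1-u^2)^{1}\cdot O\!\big(|\log(1-u^2)|\big) \to 0$ since $t\log t \to 0$ as $t \searrow 0^+$. Either way the limit is zero, which is exactly the claim.

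There is no real obstacle here; the statement is a direct asymptotic computation, and the only point requiring a moment's care is the borderline case $\lambda = 1$, where the naive power count gives $(1-u^2)^0 = 1$ and one must notice that the coefficient $\lambda^2 - 1$ vanishes precisely then, leaving only a logarithmic divergence that is killed by the extra factor of $(1-u^2)$. I would therefore present the proof by splitting into the two cases $\lambda > 1$ and $\lambda = 1$, in each case extracting the dominant term of $f(u)$ and invoking either $(1-u^2)^{\lambda-1}\to 0$ or $(1-u^2)\log(1-u^2)\to 0$. The hypothesis $\lambda \geq 1$ is used exactly to ensure $\lambda - 1 \geq 0$ so that no positive power of $(1-u^2)^{-1}$ survives after multiplication.
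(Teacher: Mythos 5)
Your computation is correct and is precisely the ``direct computation'' the paper invokes without writing out: reduce $Z_2/Z_1$ to $b^{-1}(1-u^2)^{\lambda}f(u)$, note that the only unbounded pieces of $f$ are $(\lambda^2-1)/(1-u^2)$ and $\log(1-u^2)$, and observe that the borderline case $\lambda=1$ is saved because the coefficient $\lambda^2-1$ vanishes there, leaving only $(1-u^2)\log(1-u^2)\to 0$. Nothing further is needed.
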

So for any $\lambda\geq 1$,
\begin{align*}
 z(u,\tau) &\approx e^{-\lambda\tau} b  u^{-2}\left(1-u^2\right)^{1+\lambda} + O\left(e^{-2\lambda\tau}Z_2(u)\right)
\end{align*}
is a valid approximation as $u\nearrow 1$ when $\tau$ is sufficiently large. Going in the other direction, as $u\searrow 0$,
\begin{align*}
 Z_1(u) \approx b u^{-2},\quad  Z_2(u) = O\left(u^{-4}\right),
\end{align*}
so the approximation
\begin{align*}
 z &\approx e^{-\lambda\tau}b u^{-2} + O\left(e^{-2\lambda\tau}u^{-4}\right).
\end{align*}
is valid as long as
\begin{align*}
 \left\vert e^{-\lambda\tau}b u^{-2} \right\vert \gg \left\vert e^{-2\lambda\tau}u^{-4} \right\vert,
\end{align*}
which is when
\begin{align*}
 u \gg e^{-\lambda\tau/2},
\end{align*}
or equivalently, in the $r$-coordinate,
\begin{align*}
 r \gg e^{(\gamma-\lambda/2)\tau}.
\end{align*}

From now on, given $\lambda\geq 1$, we choose $\gamma=\lambda/2$.


\subsection{Matching condition}\label{matching}
We now match the approximate solutions in the interior and exterior regions assuming $\tau$ is sufficiently large. At $r=A \gg 1$, the approximate solution in the interior region is
\begin{align*}
 z(A) & \approx \mathfrak{B}(aA)\\
      & \approx (aA) ^{-2}.
\end{align*}
At $u= e^{-\lambda\tau/2} A$, the approximate solution in the exterior region is
\begin{align*}
 z\left( e^{-\lambda\tau/2} A \right) & \approx e^{-\lambda\tau} Z_1\left(e^{-\lambda\tau/2} A \right)\\
& \approx b A^{-2}\left(1-e^{-\lambda\tau}A^2\right)^2\\
&\approx bA^{-2}.
\end{align*}
Thus, matching these two approximations implies that for a given constant $a>0$, we ought to have
\begin{align*}
 b \approx a^{-2}.
\end{align*}
This relation is made precise in Lemma \ref{patch}.


\subsection{Features of the formal solution} For each $N\geq 3$ and $\lambda\geq 1$, we match the approximate solutions in the interior and the exterior regions to obtain the formal solution, denoted by $z_{\text{form}}$. We will, cf. Lemma \ref{complete}, use the formal solution to define metrics that are complete on $\RR^N$, i.e., one approaches spatial infinity as $u\nearrow 1$. As $u\nearrow 1$, $z_{\text{form}}(u)\searrow 0$, i.e., $\psi_s\searrow 0$, so the metric \eqref{eq:g(s)} is approaching that of a round cylinder near spatial infinity. As $u\searrow 0$ and $\tau\nearrow\infty$, $z_{\text{form}}(u)\nearrow 1$ and the formal solution $z_{\text{form}}$ is asymptotic to a Bryant soliton profile function near the origin.

The norm of the curvature tensor for the formal solution achieves its maximum value at the origin $O$ \cite{AIK12}, where we have
\begin{align*}
 \left\vert \Rm(O,t) \right\vert & = \frac{C}{(T-t)^{\lambda+1}}
\end{align*}
for some constant $C$ depending on the dimension $N$ and the scaling parameter $a$ in $\mathfrak{B}(ar)$. Thus, the curvature of a Ricci flow solution that asymptotically approaches the formal solution necessarily blows up at the same rate.


\section{Subsolution and supersolution}\label{subsuper}
Let $g$ be a metric of the form \eqref{eq:g(z)} that is evolving under Ricci flow. Then $g$ is determined by its profile function $z$ which, in the $u$-coordinate, satisfies the quasilinear parabolic PDE \eqref{eq:RFzu}. In this section, we construct subsolution and supersolution to this equation in the interior and exterior regions, respectively.


\subsection{In the interior region}
Recall that in the $r$-coordinate, $z$ satisfies the equation $\mathcal{T}_r[z]=0$, where the operator $\mathcal{T}_r$ is defined in \eqref{eq:T_r}. We call $z$ a subsolution (supersolution) of $\mathcal{T}_r[z]=0$ if $\mathcal{T}_r[z]\leq0$ ($\geq 0$).

\begin{lemma}\label{z_int}
Let $\lambda\geq 1$. For any $A_1>0$, there exist a bounded function $\beta:(0,\infty)\to\RR$, a sufficiently small $B_1>0$, and a sufficiently large $\tau_1<\infty$, all depending only on $A_1$ such that the functions
\begin{align}
 z_{\text{int}}^{\pm} & := \mathfrak{B}(A_1 r) \pm e^{-\lambda\tau}\beta(r)
\end{align}
are sub- ($z_{\text{int}}^{-}$) and super- ($z_{\text{int}}^{+}$) solutions of  $\mathcal{T}_r[z]=0$ in the interior region
\begin{align}\label{eq:def_int}
 \Omega_{\text{int}} := \left \{ 0 \leq r \leq B_1 e^{\lambda\tau/2} \right \}
\end{align}
for all $\tau \geq \tau_1$.
\end{lemma}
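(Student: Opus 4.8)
The plan is to adapt the tip-region construction of \cite{AIK11, AIK12} to the present noncompact setting. With $\gamma=\lambda/2$, the operator \eqref{eq:T_r} reads
\begin{align*}
 \mathcal{T}_r[z] = e^{-\lambda\tau}\left\{\p_\tau\vert_r z + \tfrac{1-\lambda}{2}\,r z_r\right\} - \tfrac{1}{2(n-1)}\mathcal{E}_r[z].
\end{align*}
The starting observation is that the Bryant profile $\mathfrak{B}(A_1 r)$ solves $\mathcal{E}_r[z]=0$ exactly and is independent of $\tau$, so it has the single small defect
\begin{align*}
 \mathcal{T}_r[\mathfrak{B}(A_1 r)] = e^{-\lambda\tau}\,\tfrac{1-\lambda}{2}\,r A_1 \mathfrak{B}'(A_1 r) = e^{-\lambda\tau}\,\tfrac{\lambda-1}{2}\,r A_1 \big|\mathfrak{B}'(A_1 r)\big| \;\geq\; 0
\end{align*}
(using $\lambda\geq 1$ and $\mathfrak{B}'<0$). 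So $\mathfrak{B}(A_1 r)$ is already a weak supersolution (an exact solution when $\lambda=1$), and the whole point is to add a correction $\pm e^{-\lambda\tau}\beta(r)$ that upgrades it to a strict supersolution and, more importantly, produces a subsolution below it.

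To choose $\beta$ I would linearize $\mathcal{E}_r$ at $\mathfrak{B}(A_1 r)$. Writing $\mathcal{E}_r=\mathcal{L}_r+\mathcal{Q}_r$ with $\mathcal{Q}_r[\cdot]=\hat{\mathcal{Q}}_r[\cdot,\cdot]$, one has the exact splitting $\mathcal{E}_r[\mathfrak{B}(A_1 r)\pm e^{-\lambda\tau}\beta]=\pm e^{-\lambda\tau}\mathcal{M}_{A_1}[\beta]+e^{-2\lambda\tau}\mathcal{Q}_r[\beta]$, where $\mathcal{M}_{A_1}[\phi]:=\mathcal{L}_r[\phi]+2\hat{\mathcal{Q}}_r[\mathfrak{B}(A_1 r),\phi]$ is the (linear, second order) linearization. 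Substituting into $\mathcal{T}_r$ gives
\begin{align*}
 \mathcal{T}_r\big[z_{\text{int}}^{\pm}\big] = e^{-\lambda\tau}\left\{\tfrac{\lambda-1}{2}\,r A_1\big|\mathfrak{B}'(A_1 r)\big| \,\mp\, \tfrac{1}{2(n-1)}\mathcal{M}_{A_1}[\beta]\right\} + e^{-2\lambda\tau}\,E_{\pm}(r),
\end{align*}
where $E_{\pm}:=\mp\lambda\beta\pm\tfrac{1-\lambda}{2}r\beta'-\tfrac{1}{2(n-1)}\mathcal{Q}_r[\beta]$, which is bounded on $(0,\infty)$ as soon as $\beta$ is smooth, bounded, and vanishes to second order at $r=0$ (the $r^{-1},r^{-2}$ factors in $\mathcal{Q}_r$ are then harmless). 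The plan is then to take $\beta$ to be a smooth bounded solution of the linear ODE
\begin{align*}
 \mathcal{M}_{A_1}[\beta] = (n-1)(\lambda-1)\,r A_1\mathfrak{B}'(A_1 r) - \delta\,w(r)
\end{align*}
for a small fixed $\delta>0$ and a fixed positive weight $w$ with $w(r)\sim r^2$ as $r\searrow 0$ and $w(r)\sim r^{-2}$ as $r\nearrow\infty$ (say $w(r)=r^2(1+r^2)^{-2}$); this plays the role of the formal correction $\beta_1$ of Section \ref{formal}, with an extra margin built in. With this choice the order-$e^{-\lambda\tau}$ bracket above becomes exactly $(\lambda-1)rA_1|\mathfrak{B}'(A_1 r)|+\tfrac{\delta}{2(n-1)}w(r)>0$ for $z_{\text{int}}^{+}$ and exactly $-\tfrac{\delta}{2(n-1)}w(r)<0$ for $z_{\text{int}}^{-}$.

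Granting such a $\beta$, the lemma follows by absorbing the $e^{-2\lambda\tau}$ remainder over $\Omega_{\text{int}}$. Since $E_{\pm}=O(r^2)$ as $r\searrow 0$ and $E_{\pm}=O(1)$ as $r\nearrow\infty$, while $w(r)\sim r^2$ near $0$ and $w(r)\sim r^{-2}$ near $\infty$, one has $|E_{\pm}(r)|/w(r)\lesssim 1+r^2$ on $(0,\infty)$; hence for $0\leq r\leq B_1 e^{\lambda\tau/2}$,
\begin{align*}
 \frac{e^{-2\lambda\tau}\,|E_{\pm}(r)|}{e^{-\lambda\tau}\,w(r)} = e^{-\lambda\tau}\,\frac{|E_{\pm}(r)|}{w(r)} \lesssim e^{-\lambda\tau}\big(1+r^2\big) \leq e^{-\lambda\tau}+B_1^2 ,
\end{align*}
which is less than $\tfrac{\delta}{2(n-1)}$ once $B_1$ is small (depending on $A_1$, which fixes $\beta$, $\delta$, $w$, and the implied constants) and $\tau\geq\tau_1$ is large. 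Near $r=0$ the remainder and the weight vanish to comparable order, so no degeneration occurs there. The same smallness of $B_1$ and largeness of $\tau_1$ keep $z_{\text{int}}^{-}=\mathfrak{B}(A_1 r)-e^{-\lambda\tau}\beta>0$ throughout $\Omega_{\text{int}}$, so that $\mathcal{T}_r$ remains quasilinear parabolic, since at the moving boundary $\mathfrak{B}(A_1 r)\approx(A_1 B_1)^{-2}e^{-\lambda\tau}$ by \eqref{eq:Bri}, which dominates $e^{-\lambda\tau}\sup|\beta|$. Combining the two displays yields $\mathcal{T}_r[z_{\text{int}}^{+}]\geq 0\geq \mathcal{T}_r[z_{\text{int}}^{-}]$ on $\Omega_{\text{int}}$ for all $\tau\geq\tau_1$.

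The hard part is the ODE step: producing a globally smooth, bounded $\beta$ solving $\mathcal{M}_{A_1}[\beta]=(n-1)(\lambda-1)rA_1\mathfrak{B}'(A_1 r)-\delta w$. At $r=0$ the linearization $\mathcal{M}_{A_1}$ has a regular singular point with indicial roots $2$ and $-(n-1)$; one homogeneous solution is the smooth scaling mode $\eta_1(r):=\tfrac{d}{da}\mathfrak{B}(ar)\big|_{a=A_1}=r\mathfrak{B}'(A_1 r)$ and the other blows up like $r^{-(n-1)}$, so one selects the smooth branch, and since the inhomogeneity contains the resonant multiple of $\eta_1$ coming from the defect term, one verifies that the corresponding particular solution stays bounded, using that $\eta_1=O(r^2)$ near $0$ and $O(r^{-2})$ near $\infty$ by \eqref{eq:Br0} and \eqref{eq:Bri}: solving $\mathcal{M}_{A_1}[\beta]=c\,\eta_1$ by the ansatz $\beta=v\,\eta_1$ reduces it to a first order equation for $v'$ whose solution grows at most quadratically, so $v\,\eta_1$ remains bounded. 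At $r=\infty$ the operator degenerates as $\mathfrak{B}(A_1 r)\to 0$; both homogeneous solutions decay, and the variation-of-parameters particular solution tends to a finite limit because the inhomogeneity is $O(r^{-2})$ there (for $\lambda>1$ this limit is generically nonzero, which is precisely why $B_1$ must be small at the outer boundary). The remaining estimates needed to pass from the two displayed identities to the claimed inequalities are routine but lengthy.
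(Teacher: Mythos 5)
Your proposal is correct and follows essentially the same route as the paper: perturb the Bryant profile by $e^{-\lambda\tau}\beta$ with $\beta$ solving the linearization $\mathcal{L}_r[\beta]+2\hat{\mathcal{Q}}_r[\mathfrak{B}(A_1r),\beta]=(\text{source}\sim r\mathfrak{B}')$, verify $\beta=O(r^2)$ at $r=0$ and $\beta=O(1)$ at $r=\infty$, and absorb the $e^{-2\lambda\tau}$ quadratic remainder against the order-$e^{-\lambda\tau}$ margin of size $\min\{r^2,r^{-2}\}$, which is exactly what forces $r\leq B_1e^{\lambda\tau/2}$ and $\tau\geq\tau_1$. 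The only (harmless) variations are your choice of inhomogeneity $(n-1)(\lambda-1)rA_1\mathfrak{B}'-\delta w$ in place of the paper's $2(n-1)\hat{A}r\mathbf{B}'$, and your use of reduction of order via the scaling mode $\eta_1=r\mathfrak{B}'(A_1r)$ where the paper instead matches explicit local solutions of the ODE near $r=0$ and $r=\infty$.
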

\begin{proof}
Let $\mathbf{B}(r):=\mathfrak{B}(A_1 r)$. For $z(r) = \mathbf{B}(r) + e^{-\lambda\tau}\beta(r)$ to be a supersolution, it suffices to show $\mathcal{T}_r [z]\geq 0$.

Since $\mathbf{B}(r)$ solves $\mathcal{E}_r[z]=0$,
\begin{align*}
\mathcal{T}_r \left[z^{+}_{\text{int}}\right]   = & e^{-\lambda\tau}\left\{ - \frac{\mathcal{L}_r[\beta] + 2\hat{\mathcal{Q}}_r[\mathbf{B}, \beta]}{2(n-1)} + \frac{\lambda+1}{2}r\mathbf{B}'\right\} + e^{-2\lambda\tau}\left\{-\lambda\beta + \frac{\lambda+1}{2}r\beta' - \frac{\mathcal{Q}_r[\beta]}{2(n-1)} \right\}.
\end{align*}
Set $\hat A:=1+\frac{\lambda+1}{2}$, and let $\beta$ solve the second order linear inhomogeneous ODE
\begin{align}\label{eq:ODE1}
  \mathcal{L}_r[\beta] + 2\hat{\mathcal{Q}}_r[\mathbf{B}, \beta] = 2(n-1)\hat A r\mathbf{B}'.
\end{align}
Using the definitions of $\mathcal{L}_r$ and $\hat{\mathcal{Q}}_r$ in \eqref{eq:L} and \eqref{eq:Q} respectively, equation \eqref{eq:ODE1} becomes
\begin{align}\label{eq:ODE1-1}
 \mathbf{B} \beta''  + & \left\{\frac{n-1}{r} - \mathbf{B}' - \frac{\mathbf{B}}{r} \right\}\beta' + \left\{\mathbf{B}'' - \frac{\mathbf{B}'}{r} + 2(n-1)\frac{1-2\mathbf{B}}{r^2} \right\}\beta = 2(n-1)\hat{A}r\mathbf{B}'.
\end{align}
Recall the asymptotic expansions of $\mathbf{B}(r)$ near $r=0$ and $r=\infty$ given in \eqref{eq:Br0} and \eqref{eq:Bri}, respectively. Then near $r=0$, equation \eqref{eq:ODE1-1} is approximated by
\begin{align*}
 \beta'' + \frac{n-2}{r}\beta' - \frac{2(n-1)}{r^2}\beta = - C_1 r^2 \quad(C_1>0),
\end{align*}
whose general solution is
\begin{align*}
 \beta_0 = C_2 r^{1-n} + C_3 r^2 - C_4 r^4,
\end{align*}
where $C_2, C_3$ are arbitrary constants and $C_4$ is a constant depending on $C_1$. Discarding the unbounded solution and choosing $C_3=1$, then there exists a solution $\beta_p$ to equation \eqref{eq:ODE1} with
\begin{align*}
 \beta_p (r) = r^2 + o\left(r^2\right) \quad \text{as }r\searrow 0.
\end{align*}
Near $r=\infty$, the ODE \eqref{eq:ODE1-1} is a perturbation of the equation
\begin{align*}
 \frac{1}{(A_1 r)^2}\beta'' + \frac{n-1}{r} \beta' + \frac{2(n-1)}{r^2}\beta = - \frac{4(n-1)\hat A}{(A_1 r)^2},
\end{align*}
whose general solution is 
\begin{align*}
 \beta_\infty(r) & = C_5 r e^{-\alpha r^2} + C_6 r \int_{1}^{r}\rho^{-2}e^{-\alpha(r^2-\rho^2)}d\rho-\frac{2\hat A}{A_1^2}
\end{align*}
with $\alpha:=\frac{n-1}{2}A_1^2$ and arbitrary constants $C_5, C_6$. The second term in this expression is $O\left(r^{-2}\right)$. So every solution of equation \eqref{eq:ODE1}, in particular $\beta_p(r)$ given above, has the following asymptotic expansions:
\begin{align}\label{beta_asymp}
 \beta(r) = \left\{
\begin{array}{cc}
 r^2 + o\left(r^2\right) & \text{as }r\searrow 0,\\
 -2\hat A/A_1^2+o\left(1\right) & \text{as }r\nearrow \infty.
\end{array}
\right.
\end{align}
Also, the asymptotic expansions
\begin{align*}
 -r\mathbf{B}'(r) = \left\{
\begin{array}{cc}
C_7 r^2 + o\left(r^2\right) & \text{as } r\searrow 0, \\
C_8 r^{-2} + o\left(r^{-2}\right) & \text{as } r\nearrow \infty,
\end{array}\right.
\end{align*}
imply that
\begin{align*}
  -r\mathbf{B}'(r) \geq C_9 \min \left\{ r^2, r^{-2} \right\}.
\end{align*}
Then in view of \eqref{beta_asymp}, we have for $0 < r\leq 1$,
\begin{align*}
 \left\vert -\lambda \beta + \frac{\lambda+1}{2}r\beta' - \frac{\mathcal{Q}_r[\beta]}{2(n-1)} \right\vert\leq C_{10} r^2,
\end{align*}
and hence
\begin{align*}
 \mathcal{T}_r \left[z^{+}_{\text{int}}\right] & \geq -e^{-\lambda\tau}r\mathbf{B}'(r) - e^{-2\lambda\tau}C_{10} r^2\\
& \geq e^{-\lambda\tau}r^2 \left( C_9 -e^{-\lambda\tau} C_{10} \right)\\
& > 0
\end{align*}
for all $\tau\geq\tau_1$ with $\tau_1$ sufficiently large. And for $r\geq 1$, 
\begin{align*}
 \left\vert -\lambda \beta + \frac{\lambda+1}{2}r\beta' - \frac{\mathcal{Q}_r[\beta]}{2(n-1)} \right\vert\leq C_{11},
\end{align*}
so then
\begin{align*}
 \mathcal{T}_r \left[z^{+}_{\text{int}} \right] & \geq -e^{-\lambda\tau}r\mathbf{B}'(r) - e^{-2\lambda\tau}C\\
 & \geq e^{-\lambda\tau} \left( C_9 r^{-2} - e^{-\lambda\tau} C_{11} \right)\\
& > 0
\end{align*}
provided that $r<B_1 e^{\lambda\tau/2}$ with constant $B_1:=\sqrt{C_9/C_{11}}$.

Therefore, $z_{\text{int}}^+$ is indeed a supersolution. That $z_{\text{int}}^-$ is a subsolution is proved similarly.
\end{proof}


\subsection{In the exterior region}
Recall that in the $u$-coordinate, $z$ evolves by equation \eqref{eq:RFzu}, which we rewrite as $\mathcal{D}_u[z]=0$, where
\begin{align}\label{eq:D}
\mathcal{D}_u[z] & := \left.\p_\tau\right\vert_u z - \frac{1}{2}\left(u^{-1}-u\right)z_u-u^{-2}z-\frac{\mathcal{Q}_u[z]}{2(n-1)}.     
\end{align}
We call $z$ a subsolution (supersolution) of $\mathcal{D}_u[z]=0$ if $\mathcal{D}_r[z]\leq0$ ($\geq0$).
\begin{lemma}\label{z_ext}
Let $\lambda\geq 1$. Define $Z_1(u) := u^{-2}\left(1-u^2\right)^{1+\lambda}$. Given $A_2>0$, there exist a function $\zeta:(0,1)\to\RR$, a constant $B_2>0$, a sufficiently large $\tau_2<\infty$, and a constant $A_3^\ast<\infty$ depending only on $A_2$ such that for any $A_3\geq A_3^\ast$, the functions
\begin{align}
 z_{\text{ext}}^{\pm}(u,\tau):=e^{-\lambda\tau}A_2Z_1(u) \pm e^{-2\lambda\tau}A_3 \zeta(u)
\end{align}
are sub- ($z_{\text{ext}}^{-}$) and super- ($z_{\text{ext}}^{+}$) solutions of $\mathcal{D}_u[z]=0$ in the exterior region
\begin{align}\label{eq:def_ext}
 \Omega_{\text{ext}} := \left\{ B_2\sqrt{\frac{A_3}{A_2}}e^{-\lambda\tau/2} \leq u < 1 \right\},
\end{align}
for all $\tau\geq \tau_2$ where $\tau_2$ depends only on $A_2$ and $A_3$.
\end{lemma}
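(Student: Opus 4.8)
The plan is to construct the corrector $\zeta(u)$ by solving the natural linearized ODE, just as in Lemma \ref{z_int}, and then verify the sign conditions $\mathcal{D}_u[z_{\text{ext}}^\pm]\gtrless 0$ by expanding $\mathcal{D}_u$ around the leading term $e^{-\lambda\tau}A_2 Z_1(u)$. Since $Z_1$ solves the homogeneous ODE \eqref{eq:ODE_Z1}, i.e.\ $\tfrac12(u^{-1}-u)Z_1' + (u^{-2}+\lambda)Z_1 = 0$, substituting $z = e^{-\lambda\tau}A_2 Z_1 + e^{-2\lambda\tau}A_3\zeta$ into \eqref{eq:D} should produce a cancellation at order $e^{-\lambda\tau}$, leaving the order-$e^{-2\lambda\tau}$ terms to dominate:
\begin{align*}
 \mathcal{D}_u[z_{\text{ext}}^{+}] = e^{-2\lambda\tau}\left\{ -2\lambda A_3\zeta - \tfrac12 A_3(u^{-1}-u)\zeta' - u^{-2}A_3\zeta - \frac{A_2^2 \mathcal{Q}_u[Z_1]}{2(n-1)} \right\} + O(e^{-3\lambda\tau}),
\end{align*}
where the $O(e^{-3\lambda\tau})$ collects the cross term $2\hat{\mathcal{Q}}_u[Z_1,\zeta]$ and $\mathcal{Q}_u[\zeta]$. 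So I would \emph{define} $\zeta$ to solve
\begin{align*}
 \tfrac12(u^{-1}-u)\zeta' + (u^{-2}+2\lambda)\zeta = \frac{\mathcal{Q}_u[Z_1]}{2(n-1)},
\end{align*}
which is exactly equation \eqref{eq:ODE_Z2} up to the normalization of $Z_1$; its solutions are $\zeta(u) = u^{-2}(1-u^2)^{1+2\lambda}f(u)$ with $f$ the logarithmic expression already computed, and by the unnumbered Lemma preceding this one, $\lim_{u\nearrow 1}|\zeta(u)/Z_1(u)| = 0$ when $\lambda\geq 1$. With this choice the leading part of $\{\cdots\}$ vanishes and the sign of $\mathcal{D}_u[z_{\text{ext}}^\pm]$ at order $e^{-2\lambda\tau}$ is governed by the positive multiple $A_3$ of a definite-sign quantity minus the cross/quadratic remainder.

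The key steps, in order: (i) derive the above expansion of $\mathcal{D}_u$ explicitly, tracking which terms sit at which power of $e^{-\lambda\tau}$ and which carry factors of $A_2$ versus $A_3$; (ii) solve the defining ODE for $\zeta$, record its behavior as $u\searrow 0$ (where $\zeta = O(u^{-4})$, matching the exterior discussion) and as $u\nearrow 1$ (where $\zeta/Z_1\to 0$, and in fact one needs $\zeta$ to vanish to higher order at $u=1$ than $Z_1$ so that the quadratic term $\mathcal{Q}_u[\zeta]$ and cross term $\hat{\mathcal{Q}}_u[Z_1,\zeta]$, which involve division by $(1-u^2)$-type singular factors through the $z_{\psi\psi}$ and $z_\psi/\psi$ pieces, stay controlled); (iii) estimate the remainder terms $2\hat{\mathcal{Q}}_u[Z_1,\zeta]/(2(n-1))$ and $\mathcal{Q}_u[\zeta]/(2(n-1))$ — these are the genuinely delicate ones near $u=1$ — and bound them by $C(A_2,A_3)$ times $\zeta(u)$ or $Z_1(u)$ on the region $\Omega_{\text{ext}}$; (iv) choose $A_3^\ast$ large enough (depending on $A_2$) that the definite-sign term $A_3(2\lambda + u^{-2})\zeta$ — wait, more precisely the combination left after the ODE cancellation, which near $u=1$ is dominated by a multiple of $A_2^2\mathcal{Q}_u[Z_1]$ — genuinely dominates the cross terms; and (v) choose $\tau_2$ large (depending on $A_2,A_3$) so the residual $O(e^{-3\lambda\tau})$ is beaten by the $O(e^{-2\lambda\tau})$ main term, and restrict to $u \geq B_2\sqrt{A_3/A_2}\,e^{-\lambda\tau/2}$ so that near the inner edge $u\searrow 0$ the leading term $e^{-\lambda\tau}A_2 Z_1 \sim e^{-\lambda\tau}A_2 u^{-2}$ still dominates the corrector $e^{-2\lambda\tau}A_3\zeta \sim e^{-2\lambda\tau}A_3 u^{-4}$, which forces precisely the stated lower bound on $u$ with $B_2$ chosen from the implied constants.

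The main obstacle I expect is step (iii)/(iv): controlling the quadratic and cross terms uniformly up to $u=1$. The operator $\mathcal{Q}_u$ (equivalently $\hat{\mathcal{Q}}_u$) contains $z z_{uu}$ and $z z_u/u$, and both $Z_1$ and $\zeta$ behave like powers of $(1-u^2)$ near $u=1$, so differentiating twice drops the vanishing order by two; one must check that the products $Z_1 \zeta_{uu}$, $\zeta (Z_1)_{uu}$, $(Z_1)_u \zeta_u$ still vanish fast enough (more precisely, fast enough relative to the leading $A_2^2\mathcal{Q}_u[Z_1]$ term, which itself degenerates like $(1-u^2)^{2\lambda}$ per \eqref{eq:QuZ1}) that the inequality $\mathcal{D}_u[z_{\text{ext}}^+]\geq 0$ survives after dividing through. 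The fact that $\lambda\geq 1$ is essential here exactly as in the preceding Lemma — it is what makes $\zeta/Z_1 \to 0$ and, more, makes the remainder subordinate — so I would expect the argument to reduce to a careful accounting of powers of $(1-u^2)$ near $u=1$ and powers of $u$ near $u=0$, with $A_3$ absorbing the $u\to1$ constants and $B_2$ the $u\to0$ constants. The subsolution $z_{\text{ext}}^-$ is handled by the same estimates with signs reversed; one must also note in passing that $z_{\text{ext}}^\pm$ stay in $(0,1)$ on $\Omega_{\text{ext}}$ for $\tau\geq\tau_2$, which follows since the leading term is $O(e^{-\lambda\tau})\to 0$ uniformly on the region.
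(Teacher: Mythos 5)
There is a genuine gap at the heart of your construction: your choice of $\zeta$. You define $\zeta$ to solve $\tfrac12(u^{-1}-u)\zeta'+(u^{-2}+2\lambda)\zeta=\mathcal{Q}_u[Z_1]/(2(n-1))$, i.e.\ essentially the $Z_2$-equation \eqref{eq:ODE_Z2}, so that the order-$e^{-2\lambda\tau}$ terms ``cancel.'' They do not cancel into something of definite sign. Substituting $z_{\text{ext}}^{+}$ into \eqref{eq:D} and using \eqref{eq:ODE_Z1}, the order-$e^{-2\lambda\tau}$ part is
\begin{align*}
A_3\left\{-\tfrac12(u^{-1}-u)\zeta'-(u^{-2}+2\lambda)\zeta\right\}-\frac{A_2^2}{2(n-1)}\mathcal{Q}_u[Z_1],
\end{align*}
and with your $\zeta$ this equals $-\frac{A_3+A_2^2}{2(n-1)}\mathcal{Q}_u[Z_1]$. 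The quantity $\mathcal{Q}_u[Z_1]$ has \emph{no} definite sign: from \eqref{eq:QuZ1} the bracket tends to $4-n$ as $u\searrow 0$ and to $\lambda^2-1$ as $u\nearrow 1$, so for instance it is strictly positive near $u=1$ whenever $\lambda>1$, and near $u=0$ when $n\le 3$. Wherever $\mathcal{Q}_u[Z_1]>0$ your candidate satisfies $\mathcal{D}_u[z_{\text{ext}}^{+}]<0$ at leading order, i.e.\ it is not a supersolution, and enlarging $A_3$ cannot repair this because the $A_3$-contribution is itself a multiple of $\mathcal{Q}_u[Z_1]$ with the same indefinite sign. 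Your parenthetical ``which near $u=1$ is dominated by a multiple of $A_2^2\mathcal{Q}_u[Z_1]$ \dots\ genuinely dominates the cross terms'' presupposes exactly the definite sign that is missing.

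The correct mechanism, which is the one the paper uses, is \emph{not} cancellation but domination: one first bounds $|\mathcal{Q}_u[Z_1]|\le C_3\,u^{-6}(1-u^2)^{2\lambda}$ and then defines $\zeta$ by \eqref{eq:ODE_zeta}, i.e.\ so that $-\tfrac12(u^{-1}-u)\zeta'-(u^{-2}+2\lambda)\zeta$ equals the \emph{positive majorant} $u^{-6}(1-u^2)^{2\lambda}$ rather than $\mp\mathcal{Q}_u[Z_1]/(2(n-1))$. Then the order-$e^{-2\lambda\tau}$ term is bounded below by $(A_3-C_3A_2^2)\,u^{-6}(1-u^2)^{2\lambda}$ (and above by its negative for $z_{\text{ext}}^{-}$), which is single-signed once $A_3\ge A_3^\ast=2C_3A_2^2$; this strictly positive main term is then what absorbs the cross term $\hat{\mathcal{Q}}_u[Z_1,\zeta]$ and the quadratic term $\mathcal{Q}_u[\zeta]$, producing the lower cutoff $u\ge B_2\sqrt{A_3/A_2}\,e^{-\lambda\tau/2}$ near $u=0$ and the largeness condition on $\tau$ near $u=1$. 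Your steps (i)--(iii) and (v) (the expansion, the asymptotics $\zeta\sim u^{-4}$ and the power-counting in $(1-u^2)$, the origin of $B_2$) are in the right spirit and survive essentially unchanged once $\zeta$ is redefined this way; but as written, step (iv) fails and the lemma is not proved.
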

\begin{proof}
Since $A_2Z_1$ is a solution of the ODE \eqref{eq:ODE_Z1}, we have
\begin{align*}
 e^{2\lambda\tau}\mathcal{D}_u[z_{\text{ext}}^+] & =  A_3\left\{-\frac{1}{2}\left(u^{-1}-u\right)\zeta'-\left(u^{-2}+2\lambda\right)\zeta  \right\} - \frac{A_2^2}{2(n-1)}\mathcal{Q}_u[Z_1]\\
&\quad - \frac{A_2A_3}{n-1}e^{-\lambda\tau}\hat{\mathcal{Q}}_u[Z_1,\zeta] - \frac{A_3^2}{n-1}e^{-2\lambda\tau}\mathcal{Q}_u[\zeta].
\end{align*}
Since $0<u<1$, the definition of $Z_1$ implies that
\begin{align}\label{eq:Z1_deriv}
 \left\vert Z_1' \right\vert\leq \frac{C_1}{u\left(1-u^2\right)}Z_1,\quad \left\vert Z_1'' \right\vert \leq \frac{C_2}{u^2\left(1-u^2\right)^2}Z_1,
\end{align}
and from \eqref{eq:QuZ1},
\begin{align}\label{QZ1}
 \left\vert\mathcal{Q}_u\left[Z_1\right]\right\vert \leq C_3 u^{-6}\left(1-u^2\right)^{2\lambda}.
\end{align}

Let $\zeta : (0,1)\to\RR$ be a solution of the inhomogeneous ODE
\begin{align}\label{eq:ODE_zeta}
 -\frac{1}{2}\left(u^{-1}-u\right)\zeta'-(u^{-2}+2\lambda)\zeta = u^{-6}\left(1-u^2\right)^{2\lambda}.
\end{align}
Then we solve the ODE to obtain
\begin{align*}
\zeta(u) = u^{-4} \left(1-u^2\right)^{2\lambda}h(u),
\end{align*}
where
\begin{align*}
h(u) & = 1-2u^2+C_4 u^2\left(1-u^2\right) + 2u^2\left(1-u^2\right)\left[\log\left(1-u^2\right) - 2\log u\right]
\end{align*}
for an arbitrary constant $C_4$. This implies that $\zeta$ has the asymptotic behavior
\begin{align}\label{eq:zeta_asymp}
\zeta(u) = \left\{
\begin{array}{cc}
u^{-4} + O\left(u^{-2}\log u\right)& \text{as } u\searrow 0, \\
-\left(1-u^2\right)^{2\lambda} + O\left(\left(1-u^2\right)^{1+2\lambda}\log\left(1-u^2\right)\right)& \text{as } u\nearrow 1.
\end{array}\right. 
\end{align}
We then have the following estimates. For $0<u<1/2$, 
\begin{align}\label{eq:est1}
 \left\vert\hat{\mathcal{Q}}_u[Z_1,\zeta] \right\vert\leq C_5 u^{-8},\quad \left\vert\mathcal{Q}_u[\zeta] \right\vert\leq C_6 u^{-10}.
\end{align}
For $1/2\leq u<1$,
\begin{align}\label{eq:est2}
 \left\vert\hat{\mathcal{Q}}_u\left[Z_1,\zeta\right] \right\vert\leq C_7 \left(1-u^2\right)^{3\lambda-1},\quad \left\vert\mathcal{Q}_u\left[\zeta\right] \right\vert\leq C_8 \left(1-u^2\right)^{4\lambda-2}.
\end{align}
Using the definition of $\zeta$ and the estimate \eqref{QZ1}, we have
\begin{align*}
  e^{2\lambda\tau}\mathcal{D}_u[z_{\text{ext}}^+] & \geq \left(A_3 - C_3 A_2^2 \right)u^{-6}\left(1-u^2\right)^{2\lambda} - \frac{A_2A_3}{n-1}e^{-\lambda\tau}\left\vert \hat{\mathcal{Q}}_u\left[Z_1,\zeta\right] \right\vert - \frac{A_3^2}{n-1}e^{-2\lambda\tau} \left\vert \mathcal{Q}_u\left[\zeta \right]\right\vert.
\end{align*}
We choose $A_3^\ast = 2C_3A_2^2$. Then for $A_3\geq A_3^\ast$, we have the following. For $0<u\leq 1/2$, there exists a constant $B_2<\infty$ such that \eqref{eq:est1} implies
\begin{align*}
 e^{2\lambda\tau}\mathcal{D}_u\left[z_{\text{ext}}^+ \right] & \geq C_9 u^{-6}\left(A_2^2-C_5A_2A_3u^{-2}e^{-\lambda\tau}-C_6A_3^2u^{-4}e^{-2\lambda\tau} \right)\\
& \geq 0
\end{align*}
provided that $e^{\lambda\tau}u^2\geq B_2^2A_3/A_2$, or equivalently,
\begin{align*}
 B_2\sqrt{\frac{A_3}{A_2}}e^{-\lambda\tau/2}\leq u\leq \frac{1}{2}.
\end{align*}
For $1/2\leq u < 1$, writing $v:=1-u^2$, then in view of \eqref{eq:est2}, we have
\begin{align*}
 e^{2\lambda\tau}\mathcal{D}_u\left[z_{\text{ext}}^+\right] & \geq C_{10} \left(A_2^2-C_7 A_2 A_3 e^{-\lambda\tau}v^{\lambda-1} - C_8 A_3^2e^{-2\lambda\tau}v^{2\lambda-2} \right)v^{2\lambda}\\
& \geq 0
\end{align*}
if $\tau\geq\tau_2$ with $\tau_2$ sufficiently large.

Therefore, $z_{\text{ext}}^+$ is indeed a supersolution. That $z_{\text{ext}}^-$ is a subsolution is proved similarly.
\end{proof}


\section{Upper and lower barriers}
Recall the formal solution $z_{\text{form}}$ constructed in Section \ref{formal}. We say that a function $z^{-}$ ($z^{+}$) is a lower (upper) barrier to equation  \eqref{eq:RFzu} if $z^{-}$ is a subsolution (supersolution) that lies below (above) $z_{\text{form}}$ in an appropriate space-time region. The main result of this section is the following.

\begin{prop}\label{barriers}
There exist a sufficiently large $\tau_0<\infty$ and positive piecewise smooth functions $z^{\pm} = z^{\pm}(u,\tau)$ defined for $0<u<1$ and $\tau\geq \tau_0$, such that the following are true.
\begin{enumerate}
 \item[(B1)] $z^{\pm}$ are upper ($+$) and lower ($-$) barriers to equation \eqref{eq:RFzu} for $0<u<1$ and $\tau\geq \tau_0$.
 \item[(B2)] Near $u=0$, $z^{\pm}=z^{\pm}_{\text{int}}$; near $u=1$, $z^{\pm}=z^{\pm}_{\text{ext}}$.
 \item[(B3)] At any $\tau\in[\tau_0,\infty)$, $\lim\limits_{u\searrow 0} z^{-} = \lim\limits_{u\searrow 0} z^{+} =1$, and $\lim\limits_{u\nearrow 1}z^{-}=\lim\limits_{u\nearrow 1}z^{+}=0$.
 \item[(B4)] At any $\tau\in[\tau_0,\infty)$, there exists a constant $K$ independent of $\tau$ such that
\begin{align}
 \left\vert z^{\pm}_u/u\right\vert,\;\left\vert z^{\pm}_{uu}\right\vert\leq K e^{\lambda\tau}
\end{align}
at points where $z^{\pm}$ are smooth.
\end{enumerate}
\end{prop}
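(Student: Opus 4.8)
The plan is to obtain $z^{\pm}$ by splicing the interior barriers of Lemma~\ref{z_int} to the exterior barriers of Lemma~\ref{z_ext} along a junction curve $u=u_{\ast}^{\pm}(\tau)$ inside the overlap of their regions, after fixing the free constants so the two families meet. Fix $\lambda\geq1$ and let $z_{\text{form}}$ be the formal solution of Section~\ref{formal}, with interior profile $z_{\text{form}}\approx\mathfrak{B}(ar)$ and exterior coefficient $b$, where $b\approx a^{-2}$ (made precise in Lemma~\ref{patch}). For the upper barrier I would take $A_{1}^{+}<a$, so that by monotonicity of $\mathfrak{B}$ the gap $\mathfrak{B}(A_{1}^{+}r)-\mathfrak{B}(ar)\gtrsim\min\{r^{2},r^{-2}\}$ dominates the correction $e^{-\lambda\tau}\beta(r)$ on $\Omega_{\text{int}}$ once $\tau_{0}$ is large (shrinking $B_{1}$ if necessary), giving $z_{\text{int}}^{+}>z_{\text{form}}$ there; and $A_{2}^{+}>b$, so that $z_{\text{ext}}^{+}>z_{\text{form}}$ on $\Omega_{\text{ext}}$; together with $A_{3}^{+}\geq A_{3}^{\ast}(A_{2}^{+})$ as in Lemma~\ref{z_ext}. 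The lower barrier uses $A_{1}^{-}>a$, $A_{2}^{-}<b$, $A_{3}^{-}\geq A_{3}^{\ast}(A_{2}^{-})$, symmetrically. Once $\tau_{0}$ is large enough this will yield (B2).

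For the splice: in the $u$-coordinate $\Omega_{\text{int}}=\{0<u\leq B_{1}\}$ and $\Omega_{\text{ext}}=\{u_{L}(\tau)\leq u<1\}$ with $u_{L}(\tau)=B_{2}\sqrt{A_{3}/A_{2}}\,e^{-\lambda\tau/2}\searrow0$, so for $\tau\geq\tau_{0}$ the two regions cover $(0,1)$ and overlap in a band $\{u_{L}(\tau)\leq u\leq B_{1}\}$. Using the expansions \eqref{eq:Br0}, \eqref{eq:Bri} (normalized by $c_{2}=1$), \eqref{beta_asymp}, \eqref{eq:Z1_deriv} and \eqref{eq:zeta_asymp}, I would analyze $z_{\text{int}}^{+}-z_{\text{ext}}^{+}$ on this band in the variable $r=e^{\lambda\tau/2}u$: its sign is governed by the competition between terms of orders $e^{-\lambda\tau}u^{-2}$, $e^{-2\lambda\tau}u^{-4}$ and $e^{-\lambda\tau}$, and because the formal matching $A_{2}^{+}\approx(A_{1}^{+})^{-2}$ nearly cancels the leading orders, a precise tuning of $A_{2}^{\pm}$ against $(A_{1}^{\pm})^{-2}$ and of $A_{3}^{\pm}$ — the role of Lemma~\ref{patch} — makes $z_{\text{int}}^{+}-z_{\text{ext}}^{+}$ change sign inside the band so that at the junction $u_{\ast}^{+}(\tau)$, where $z_{\text{int}}^{+}=z_{\text{ext}}^{+}$, one has $\partial_{u}z_{\text{int}}^{+}\geq\partial_{u}z_{\text{ext}}^{+}$, with $z_{\text{int}}^{+}$ the smaller function just to its left and $z_{\text{ext}}^{+}$ just to its right. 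I then set $z^{+}:=z_{\text{int}}^{+}$ for $u\leq u_{\ast}^{+}(\tau)$ and $z^{+}:=z_{\text{ext}}^{+}$ for $u\geq u_{\ast}^{+}(\tau)$; this is positive, continuous, piecewise smooth, equals $z_{\text{int}}^{+}$ near $u=0$ and $z_{\text{ext}}^{+}$ near $u=1$ — which is (B3) — and has a concave corner at $u_{\ast}^{+}(\tau)$. The lower barrier $z^{-}$ is built the same way, with $\partial_{u}z_{\text{int}}^{-}\leq\partial_{u}z_{\text{ext}}^{-}$ at its junction, giving a convex corner.

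The rest is routine. For (B1): since $\mathcal{T}_{r}$ and $\mathcal{D}_{u}$ are the Ricci flow operator in different coordinates and differ by a positive factor, each smooth piece of $z^{+}$ (resp. $z^{-}$) is a classical supersolution (resp. subsolution) of \eqref{eq:RFzu} on its region by Lemmas~\ref{z_int} and~\ref{z_ext}, while the concave (resp. convex) corner is exactly the sign that prevents a smooth solution from touching $z^{+}$ from below (resp. $z^{-}$ from above) at $u_{\ast}^{\pm}(\tau)$ — so $z^{\pm}$ are barriers in the piecewise-smooth sense needed by the comparison principle of Section~6, bracketing $z_{\text{form}}$ at $\tau_{0}$ by (B2). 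For (B4): near $u=0$, $z^{\pm}=\mathfrak{B}(A_{1}^{\pm}r)\pm e^{-\lambda\tau}\beta(r)$ with $r=e^{\lambda\tau/2}u\to0$, and $\mathfrak{B}(0)=1$, $\beta(r)=O(r^{2})$; near $u=1$, $z^{\pm}=e^{-\lambda\tau}A_{2}^{\pm}Z_{1}(u)\pm e^{-2\lambda\tau}A_{3}^{\pm}\zeta(u)$ with $Z_{1}(u),\zeta(u)\to0$ by the definition of $Z_{1}$ and \eqref{eq:zeta_asymp}. For (B5): on the interior piece $z_{u}^{\pm}=e^{\lambda\tau/2}\partial_{r}z_{\text{int}}^{\pm}$, $z_{uu}^{\pm}=e^{\lambda\tau}\partial_{rr}z_{\text{int}}^{\pm}$, and $|\partial_{r}z_{\text{int}}^{\pm}/r|$, $|\partial_{rr}z_{\text{int}}^{\pm}|$ are bounded uniformly in $r\geq0$ from the expansions of $\mathfrak{B}$ and $\beta$; on the exterior piece one uses \eqref{eq:Z1_deriv}, \eqref{eq:zeta_asymp} and $u\gtrsim e^{-\lambda\tau/2}$ on $\Omega_{\text{ext}}$; both give the bound $Ke^{\lambda\tau}$. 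Finally $\tau_{0}$ is taken to be the largest of $\tau_{1}(A_{1}^{\pm})$, $\tau_{2}(A_{2}^{\pm},A_{3}^{\pm})$, and the threshold past which the overlap band is nonempty and the junction, corner and (B2) inequalities hold.

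The main obstacle is this matching step: one must check, uniformly for all $\tau\geq\tau_{0}$, that $z_{\text{int}}^{\pm}$ and $z_{\text{ext}}^{\pm}$ meet inside the overlap band with the corner of the correct orientation. Because the formal matching $b\approx a^{-2}$ makes the leading several orders of $z_{\text{int}}^{\pm}-z_{\text{ext}}^{\pm}$ nearly cancel, the sign of this difference across the whole band is decided by sub-leading terms, and controlling it uniformly in $\tau$ is precisely what pins down the constants $A_{1}^{\pm},A_{2}^{\pm},A_{3}^{\pm}$ recorded in Lemma~\ref{patch}.
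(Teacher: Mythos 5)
Your construction is essentially the paper's: order the interior and exterior families by sandwiching the formal constants $a$ and $b\approx a^{-2}$, show that $z^{+}_{\text{int}}$ and $z^{+}_{\text{ext}}$ cross inside the overlap band $R_D\le r\le 2R_D$ by tuning $A_2^{\pm}$ against $(A_1^{\pm})^{-2}$ (this is exactly the paper's Lemma \ref{patch} with the constraint \eqref{eq:a1a2}), and glue with a corner of the orientation that preserves the super/subsolution property — the paper phrases the glue as $\min\{z^{+}_{\text{int}},z^{+}_{\text{ext}}\}$ (resp.\ $\max$) over the whole band rather than splicing at a single junction curve, which is the same device and sidesteps worrying about multiple crossings. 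Your verifications of (B2), (B4) and (B5) likewise coincide with the paper's Lemmata \ref{int_bar}, \ref{ext_bar} and \ref{deriv_bdd}.
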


The proposition will follow from several lemmata. We first explain the idea behind its proof. We properly order $z^{\pm}_{\text{ext}}$ and $z^{\pm}_{\text{int}}$ so that $z^{-}_{\text{int}}\leq z^{+}_{\text{int}}$ and $z^{-}_{\text{ext}}\leq z^{+}_{\text{ext}}$. We then patch together $z_{\text{int}}^{+}$ and $z_{\text{ext}}^{+}$ near the interior-exterior interface to obtain an upper barrier. A similar patching argument yields a lower barrier.

\begin{lemma}\label{int_bar}
Let $\beta$ be defined as in Lemma \ref{z_int}. Let $A_1^+$ and $A_1^-$ denote the constant $A_1$ in $z_{\text{int}}^+$ and $z_{\text{int}}^-$, respectively. For $A_1^->A_1^+$, there exists $\tau_3\geq\tau_1$ such that
\begin{align*}
 z^{\pm}_{\text{int}} = \mathfrak{B}(A_1^{\pm}r) \pm e^{-\lambda\tau}\beta
\end{align*}
are properly ordered so that $z^{-}_{\text{int}}\leq z^{+}_{\text{int}}$ in $\Omega_{\text{int}}$ for all $\tau\geq\tau_3$.
\end{lemma}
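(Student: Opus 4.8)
The plan is to show that at the fixed transition radius $r=A_1$, say, (or whatever radius in $\Omega_{\text{int}}$ one uses to splice), the gap between $\mathfrak{B}(A_1^- r)$ and $\mathfrak{B}(A_1^+ r)$ dominates the $e^{-\lambda\tau}\beta$-perturbations, and moreover that this dominance persists throughout $\Omega_{\text{int}}$ for $\tau$ large. Since $\mathfrak{B}$ is smooth and strictly monotone decreasing on $(0,\infty)$, for $A_1^->A_1^+$ we have $\mathfrak{B}(A_1^- r)<\mathfrak{B}(A_1^+ r)$ for every fixed $r>0$, and the difference
\begin{align*}
 \Delta(r):=\mathfrak{B}(A_1^+ r)-\mathfrak{B}(A_1^- r)
\end{align*}
is strictly positive on $(0,\infty)$. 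The first step is to get quantitative lower bounds on $\Delta(r)$ using the asymptotic expansions \eqref{eq:Br0} and \eqref{eq:Bri}: near $r=0$, $\Delta(r)=b_2\big((A_1^-)^2-(A_1^+)^2\big)r^2+o(r^2)$, so $\Delta(r)\gtrsim r^2$; near $r=\infty$, $\Delta(r)=\big((A_1^+)^{-2}-(A_1^-)^{-2}\big)r^{-2}+o(r^{-2})$, so $\Delta(r)\gtrsim r^{-2}$; hence $\Delta(r)\geq c\min\{r^2,r^{-2}\}$ on all of $(0,\infty)$ for some $c>0$ depending only on $A_1^{\pm}$, $n$.

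Next I would compare this with the perturbation term. We need $z_{\text{int}}^- \le z_{\text{int}}^+$, i.e.
\begin{align*}
 \mathfrak{B}(A_1^- r)-e^{-\lambda\tau}\beta(r)\ \le\ \mathfrak{B}(A_1^+ r)+e^{-\lambda\tau}\beta(r),
\end{align*}
that is $2e^{-\lambda\tau}\beta(r)\geq -\Delta(r)$, equivalently $2e^{-\lambda\tau}\beta(r)+\Delta(r)\geq 0$. By \eqref{beta_asymp}, $\beta(r)=r^2+o(r^2)$ as $r\searrow0$ and $\beta(r)=-2\hat A/A_1^2+o(1)$ as $r\nearrow\infty$, so in particular $|\beta(r)|\lesssim \max\{r^2,1\}$ on $(0,\infty)$; note that near $r=0$ the term $2e^{-\lambda\tau}\beta(r)$ is actually positive and only helps, while the only place where $\beta$ can have the "wrong" sign is at large $r$, where $|\beta(r)|\le C$ while $\Delta(r)\gtrsim r^{-2}$. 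Wait — that is exactly the pinch point: for $r$ comparable to $B_1 e^{\lambda\tau/2}$ (the outer edge of $\Omega_{\text{int}}$) we have $\Delta(r)\gtrsim r^{-2}\gtrsim e^{-\lambda\tau}$, which is of the same order as $e^{-\lambda\tau}|\beta(r)|\lesssim e^{-\lambda\tau}$. So the inequality $\Delta(r)\geq 2e^{-\lambda\tau}|\beta(r)|$ reads $c\,r^{-2}\geq 2C e^{-\lambda\tau}$ there, i.e. $r^2\leq \tfrac{c}{2C}e^{\lambda\tau}$, which holds once $r\le B_1 e^{\lambda\tau/2}$ provided $B_1^2\le c/(2C)$; if the $B_1$ from Lemma \ref{z_int} is larger, shrink it (this is allowed — Lemma \ref{z_int} holds for all sufficiently small $B_1$, and the final $\Omega_{\text{int}}$ can be taken with the smaller constant). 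For $r$ in a fixed compact set $[\,0,R_0\,]$, $\Delta(r)\geq \delta>0$ while $2e^{-\lambda\tau}|\beta(r)|\le 2e^{-\lambda\tau}\sup_{[0,R_0]}|\beta|\to 0$, so the inequality holds once $\tau$ exceeds some $\tau_3^{(0)}$; and for $R_0\le r\le B_1 e^{\lambda\tau/2}$ the $\min\{r^2,r^{-2}\}=r^{-2}$ branch gives the estimate just described, uniformly in $\tau$ once $B_1$ is small. Taking $\tau_3:=\max\{\tau_1,\tau_3^{(0)}\}$ gives $z_{\text{int}}^-\le z_{\text{int}}^+$ on $\Omega_{\text{int}}$ for $\tau\geq\tau_3$, which is the assertion.

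The main obstacle — and really the only subtlety — is this competition at the outer edge of $\Omega_{\text{int}}$, where both $\Delta(r)$ and $e^{-\lambda\tau}|\beta(r)|$ are of order $e^{-\lambda\tau}$; the argument works only because the constant $c$ in the lower bound $\Delta(r)\ge c\,r^{-2}$ can be compared favorably with the constant in $|\beta(r)|\le C$ by choosing $B_1$ small, and one must check that this does not conflict with the (finitely many, $\tau$-independent) smallness requirements on $B_1$ coming from Lemma \ref{z_int}. Everything else is a routine application of the asymptotic expansions \eqref{eq:Br0}, \eqref{eq:Bri}, and \eqref{beta_asymp}. A clean way to package the write-up is: (i) record $\Delta(r)\ge c\min\{r^2,r^{-2}\}$ and $|\beta(r)|\le C\max\{1,r^2\}$; (ii) split into $0\le r\le 1$, where $2e^{-\lambda\tau}\beta(r)\ge 0$ and $\Delta\ge 0$ so nothing to prove, and $1\le r\le B_1 e^{\lambda\tau/2}$, where $\Delta(r)\ge c\,r^{-2}\ge c B_1^{-2}e^{-\lambda\tau}\ge 2C e^{-\lambda\tau}\ge 2e^{-\lambda\tau}|\beta(r)|$ after shrinking $B_1$; (iii) conclude, absorbing the one genuinely $\tau$-dependent condition into the choice of $\tau_3\ge\tau_1$.
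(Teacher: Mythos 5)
Your proposal is correct and follows essentially the same route as the paper: compare the asymptotic expansions of $\mathfrak{B}(A_1^{\pm}r)$ and $\beta$ near $r=0$ and $r=\infty$ and handle intermediate $r$ by compactness, the only delicate point being the outer edge $r\sim B_1 e^{\lambda\tau/2}$ of $\Omega_{\text{int}}$, where $\Delta(r)$ and $e^{-\lambda\tau}|\beta(r)|$ are both of order $e^{-\lambda\tau}$ --- a competition you actually treat more explicitly than the paper, which only asserts positivity ``for sufficiently large $\tau$ and $r$.'' One small slip in your packaging: since $\Delta(0)=0$, the bound $\Delta\ge\delta>0$ cannot hold on all of $[0,R_0]$; you must first excise a neighborhood of $r=0$ (where, as you note, both $\Delta$ and $2e^{-\lambda\tau}\beta$ are nonnegative by the $r\searrow 0$ expansions, so nothing is needed) and apply the compactness argument only on $[r_1,R_0]$ with $r_1>0$, which also disposes of your unjustified claim that $\beta\ge0$ on all of $[0,1]$.
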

\begin{proof}
For $A_1^- > A_1^+$, using the asymptotic expansions of $\mathfrak{B}$ and $\beta$ (cf. the proof of Lemma \ref{z_int}) we have the following. Near $r=0$, with $b_2>0$,
\begin{align*}
 z_{\text{int}}^{+} - z_{\text{int}}^{-} & = \left\{ b_2\left[(A_1^-)^2 - (A_1^+)^2 \right] + 2\left[1+o(1)\right]e^{-\lambda\tau} \right\} r^2 + O\left(r^4\right)\\
& > 0 \quad\text{as }r\searrow 0.
\end{align*}
Near $r=\infty$, with $\hat{A}=1+\frac{\lambda+1}{2}$,
\begin{align*}
 z_{\text{int}}^{+} - z_{\text{int}}^{-} & = \left[(A_1^+)^{-2}-(A_1^-)^{-2}\right]\left\{r^{-2}-2 \left[\hat{A}+o(1)\right]e^{-\tau}\right\} + O\left(r^{-4}\right)\\
& > 0
\end{align*}
for sufficiently large $\tau$ and $r$. On any bounded interval $c<r<C$ and for sufficiently large $\tau$, it is straightforward to check that $z^{+}_{\text{int}}> z^{-}_{\text{int}}$. Thus, the lemma follows.
\end{proof}

\begin{lemma}\label{ext_bar}
Let $Z_1$ and $\zeta$ be defined as in Lemma \ref{z_ext}. Let $A_2^+$ and $A_2^-$ denote the constant $A_2$ in $z_{\text{ext}}^+$ and $z_{\text{ext}}^-$, respectively. For $A_2^+>A_2^-$, if we relabel $A_3:=\max\left\{A_3(A_2^+),A_3(A_2^-)\right\}$, $B_2:=\max\left\{B_2(A_2^+),B_2(A_2^-)\right\}$, and $\tau_2:=\max\left\{\tau_2(A_2^+),\tau_2(A_2^-)\right\}$, then there exists $\tau_4\geq\tau_2$ such that
\begin{align*}
 z_{\text{ext}}^{\pm}(u,\tau) = e^{-\lambda\tau}A^{\pm}_2 Z_1(u) \pm e^{-2\lambda\tau} A_3 \zeta(u)
\end{align*}
are properly ordered $z^{-}_{\text{ext}}\leq z^{+}_{\text{ext}}$ in $\Omega_{\text{ext}}$\footnote{In definition \eqref{eq:def_ext} of $\Omega_{\text{ext}}$, we replace $A_2$ with $A_2^{-}$ since $A_2^+>A_2^-$.} for all $\tau\geq\tau_4$.
\end{lemma}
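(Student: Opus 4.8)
The plan is to verify the single inequality $z_{\text{ext}}^{-}\le z_{\text{ext}}^{+}$ on $\Omega_{\text{ext}}$ for all sufficiently large $\tau$, which is exactly what ``properly ordered'' means here. First I would subtract:
\[
 z_{\text{ext}}^{+}-z_{\text{ext}}^{-}=e^{-\lambda\tau}(A_2^{+}-A_2^{-})\,Z_1(u)+2e^{-2\lambda\tau}A_3\,\zeta(u),
\]
and, since $Z_1(u)=u^{-2}(1-u^2)^{1+\lambda}>0$ on $(0,1)$, divide by $e^{-\lambda\tau}Z_1(u)$ to reduce the claim to
\[
 (A_2^{+}-A_2^{-})+2A_3 e^{-\lambda\tau}\frac{\zeta(u)}{Z_1(u)}\geq 0 .
\]
Because $A_2^{+}>A_2^{-}$, this holds automatically at every $u$ where $\zeta(u)\ge 0$, so the entire content is to estimate $\zeta/Z_1$ on the set $\{\zeta<0\}$, which by \eqref{eq:zeta_asymp} lies in a neighborhood of $u=1$.

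For that estimate I would use the explicit formula for $\zeta$ from Lemma \ref{z_ext}, which gives $\zeta(u)/Z_1(u)=u^{-2}(1-u^2)^{\lambda-1}h(u)$, where $h$ extends continuously to $[0,1]$ (with $h(0)=1$, $h(1)=-1$), hence $\|h\|_{L^\infty(0,1)}<\infty$. This is where the hypothesis $\lambda\ge 1$ enters decisively: it forces $(1-u^2)^{\lambda-1}\le 1$ on $(0,1)$, so $|\zeta/Z_1|\le u^{-2}\|h\|_\infty$. I would then split $\Omega_{\text{ext}}$ at $u=\tfrac12$. On $\tfrac12\le u<1$ one has $|\zeta/Z_1|\le 4\|h\|_\infty$, so the correction is $O(e^{-\lambda\tau})$ and the displayed inequality holds once $\tau\ge\tau_4$ for a large enough $\tau_4$ depending on $A_2^{\pm}$ (through $A_3$). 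On $0<u<\tfrac12$ inside $\Omega_{\text{ext}}$ the defining constraint $u\ge B_2\sqrt{A_3/A_2^{-}}\,e^{-\lambda\tau/2}$ gives $e^{-\lambda\tau}u^{-2}\le A_2^{-}/(B_2^2A_3)$, whence $2A_3e^{-\lambda\tau}|\zeta/Z_1|\le 2A_2^{-}\|h\|_\infty/B_2^2$; arranging $B_2$ (which Lemma \ref{z_ext} only requires to be large) to satisfy in addition $B_2^2\ge 2A_2^{-}\|h\|_\infty/(A_2^{+}-A_2^{-})$ makes this $\le A_2^{+}-A_2^{-}$, so the inequality holds there too. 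Enlarging $B_2$ only shrinks $\Omega_{\text{ext}}$, so Lemma \ref{z_ext} still applies; taking $\tau_4$ to be the larger of $\tau_2$ and the threshold from the $u\ge\tfrac12$ case completes the proof. For bookkeeping I would also record, as in Lemma \ref{int_bar}, that near $u=0$ the difference equals $2A_3e^{-2\lambda\tau}u^{-4}(1+o(1))+e^{-\lambda\tau}(A_2^{+}-A_2^{-})u^{-2}(1+o(1))>0$, and near $u=1$ it equals $e^{-\lambda\tau}(A_2^{+}-A_2^{-})(1-u^2)^{1+\lambda}(1+o(1))>0$.

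The only genuine obstacle, and it is a mild one, is the behavior as $u\nearrow 1$: there $\zeta$ is of size $(1-u^2)^{2\lambda}$ while the leading gap $(A_2^{+}-A_2^{-})Z_1$ is only of size $(1-u^2)^{1+\lambda}$, so a priori the correction could overwhelm the ordering near $u=1$. It is precisely $\lambda\ge 1$, i.e.\ $2\lambda\ge 1+\lambda$, together with the extra $e^{-\lambda\tau}$ factor carried by the correction, that rescues it; the near-origin end is controlled, if needed at all, by shrinking $\Omega_{\text{ext}}$ slightly, and everything else is routine manipulation of the constants produced by Lemma \ref{z_ext}.
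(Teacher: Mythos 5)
Your proof is correct and follows essentially the same route as the paper's: subtract, observe that the gap $e^{-\lambda\tau}(A_2^+-A_2^-)Z_1$ dominates the $\zeta$-correction, and use $\lambda\ge 1$ (i.e.\ $(1-u^2)^{\lambda-1}\le 1$) together with the extra factor $e^{-\lambda\tau}$ to control the behavior as $u\nearrow 1$. The only cosmetic difference is near $u=0$, where the paper simply uses that $\zeta>0$ there (so the correction has the right sign), whereas you bound $|\zeta/Z_1|$ via the domain constraint $u\gtrsim e^{-\lambda\tau/2}$ after possibly enlarging $B_2$ — a harmless and slightly more robust variant.
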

\begin{proof}
For $A_2^+ > A_2^-$, the asymptotic expansions \eqref{eq:zeta_asymp} of $\zeta$ imply the following. As $u\searrow 0$, $z_{\text{ext}}^{+} > z_{\text{ext}}^{-}$. As $u\nearrow 1$,
\begin{align*}
 z_{\text{ext}}^{+} - z_{\text{ext}}^{-} & = e^{-\lambda\tau}\left(1-u^2\right)^{1+\lambda}\left\{ (A_2^+ - A_2^-)u^{-2} - 2A_3e^{-\lambda\tau}\left(1-u^2\right)^{\lambda-1} \right\} \\
&\quad + e^{-2\lambda\tau} O\left(\left(1-u^2\right)^\lambda\log\left(1-u^2\right)\right)\\
& > 0
\end{align*}
for all $\tau$ sufficiently large. On any interval $0<a\leq u\leq b<1$ and for sufficiently large $\tau$, $z_{\text{ext}}^{+} > z_{\text{ext}}^{-}$ by a direct computation. Thus, the lemma is proved.
\end{proof}

For sufficiently large $\tau$, the regions $\Omega_{\text{int}}$ and $\Omega_{\text{ext}}$ intersect. In below, we state and prove a patching lemma for $z_{\text{int}}^{+}$ and $z_{\text{ext}}^{+}$. We omit the patching lemma for $z_{\text{int}}^{-}$ and $z_{\text{ext}}^{-}$, since its statement and proof are entirely analogous. To shorten the notation, we write $A_1^+$, $A_2^+$ as $A_1$, $A_2$.

\begin{lemma}\label{patch}
Let $R_D := D\sqrt{A_3/A_2}$ where $D>0$ is arbitrary. Suppose $A_1$ and $A_2$ satisfy the following inequality
\begin{align}\label{eq:a1a2}
 \left( 1 + \frac{3}{8}D^{-2} \right) A_2 < A_1^{-2} < \left( 1 + \frac{1}{2}D^{-2} \right) A_2. 
\end{align}
Then there exists $\tau_5:=\max\left\{\tau_3,\tau_4\right\}$ sufficiently large such that
\begin{align}
z_{\text{int}}^+ & \leq z_{\text{ext}}^+ \quad\text{at } r=R_D, \label{eq:patch1}\\
z_{\text{int}}^+ & \geq z_{\text{ext}}^+ \quad\text{at } r=2R_D, \label{eq:patch2}
\end{align}
for $\tau\geq\tau_5$.
\end{lemma}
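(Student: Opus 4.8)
The plan is to evaluate $z^{+}_{\text{int}}$ and $z^{+}_{\text{ext}}$ at the two fixed radii $r=R_D$ and $r=2R_D$, compare their leading-order terms as $\tau\nearrow\infty$, and observe that the numerical coefficients $\tfrac38$ and $\tfrac12$ appearing in \eqref{eq:a1a2} have been chosen precisely so that the comparison tips the right way with a definite, $\tau$-independent margin.

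First I would treat $z^{+}_{\text{ext}}$. At a fixed value of $r$ one has $u=re^{-\lambda\tau/2}\searrow 0$ as $\tau\nearrow\infty$; since $Z_1(u)=u^{-2}(1-u^2)^{1+\lambda}$, the term $e^{-\lambda\tau}A_2Z_1(u)$ equals $A_2r^{-2}(1-u^2)^{1+\lambda}=A_2r^{-2}+O(e^{-\lambda\tau})$, while the expansion \eqref{eq:zeta_asymp} gives $e^{-2\lambda\tau}A_3\zeta(u)=A_3r^{-4}+O(\tau e^{-\lambda\tau})$. Hence $z^{+}_{\text{ext}}(r,\tau)=A_2r^{-2}+A_3r^{-4}+o(1)$ uniformly for $r\in[R_D,2R_D]$. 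Inserting $R_D^2=D^2A_3/A_2$, so that $A_3R_D^{-4}=D^{-2}A_2R_D^{-2}$, this yields $z^{+}_{\text{ext}}(R_D,\tau)=A_2R_D^{-2}(1+D^{-2})+o(1)$ and $z^{+}_{\text{ext}}(2R_D,\tau)=\tfrac14A_2R_D^{-2}(1+\tfrac14D^{-2})+o(1)$.

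Next I would treat $z^{+}_{\text{int}}(r,\tau)=\mathfrak{B}(A_1r)+e^{-\lambda\tau}\beta(r)$. The term $e^{-\lambda\tau}\beta(r)$ is $o(1)$ since $\beta$ is bounded (Lemma \ref{z_int}), and for $\mathfrak{B}(A_1r)$ I would use the expansion \eqref{eq:Bri} normalized with $c_2=1$, which is legitimate because $A_1R_D$ is large: by \eqref{eq:a1a2} the quantity $A_1^{-2}$ is comparable to $A_2$, while $R_D=D\sqrt{A_3/A_2}$ can be made as large as desired by enlarging $A_3$ (Lemma \ref{z_ext} only imposes $A_3\ge A_3^{\ast}$). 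This gives $\mathfrak{B}(A_1r)=(A_1r)^{-2}\bigl(1+O((A_1R_D)^{-2})\bigr)$ on $[R_D,2R_D]$, hence $z^{+}_{\text{int}}(R_D,\tau)=A_1^{-2}R_D^{-2}\bigl(1+O((A_1R_D)^{-2})\bigr)+o(1)$ and $z^{+}_{\text{int}}(2R_D,\tau)=\tfrac14A_1^{-2}R_D^{-2}\bigl(1+O((A_1R_D)^{-2})\bigr)+o(1)$.

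Finally, \eqref{eq:a1a2} gives $A_1^{-2}<(1+\tfrac12D^{-2})A_2$, so the main term of $z^{+}_{\text{int}}(R_D,\tau)$ lies strictly below that of $z^{+}_{\text{ext}}(R_D,\tau)\approx(1+D^{-2})A_2R_D^{-2}$ with a gap of order $D^{-2}A_2R_D^{-2}$; likewise $A_1^{-2}>(1+\tfrac38D^{-2})A_2$ puts the main term of $z^{+}_{\text{int}}(2R_D,\tau)$ strictly above that of $z^{+}_{\text{ext}}(2R_D,\tau)\approx\tfrac14(1+\tfrac14D^{-2})A_2R_D^{-2}$, again with a gap of order $D^{-2}A_2R_D^{-2}$. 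Choosing $A_3$, hence $A_1R_D$, large enough shrinks the $O((A_1R_D)^{-2})$ corrections below these gaps, and then taking $\tau_5:=\max\{\tau_3,\tau_4\}$ sufficiently large pushes every $o(1)$ term below the remaining slack, so that \eqref{eq:patch1} and \eqref{eq:patch2} hold for $\tau\ge\tau_5$. I expect the only real obstacle to be the bookkeeping in this last step: one must keep the two kinds of error cleanly separated --- the $\tau$-vanishing ones, controlled by enlarging $\tau_5$, and the fixed $O((A_1R_D)^{-2})$ tail of the Bryant asymptotics, controlled by taking $A_3$ large relative to $A_2^2$ --- and verify that the margins engineered into $\tfrac38<\tfrac12$ (equivalently $\tfrac14<\tfrac38$ after the rescaling $r\mapsto 2R_D$) genuinely dominate both.
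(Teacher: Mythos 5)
Your proposal is correct and follows essentially the same route as the paper: expand $z^{+}_{\text{int}}$ via \eqref{eq:Bri} and $z^{+}_{\text{ext}}$ via $u=re^{-\lambda\tau/2}$ and \eqref{eq:zeta_asymp} at the fixed radii, use $A_3R_D^{-4}=D^{-2}A_2R_D^{-2}$ to reduce the comparison to \eqref{eq:a1a2}, and absorb the $r^{-4}$-level corrections by taking $A_3$ large and the $\tau$-decaying errors by taking $\tau_5$ large. The paper packages the correction step as the explicit conditions $A_3\geq\hat C A_1^{-4}$ and $A_3\geq\hat C\sqrt{A_2}$ forcing the combined $r^{-4}$ error below $A_2/2$ (which is exactly where the constants $\tfrac12$ and $\tfrac38$ come from), but this is the same bookkeeping you describe.
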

\begin{proof}
At the interface of the interior and exterior regions, we have the following when $\tau\geq\tau_5$. From the interior region, as $r\nearrow\infty$, $\mathfrak{B}(r)=r^{-2}+c_2 r^{-4}+O\left(r^{-6}\right)$, and so
\begin{align*}
z_{\text{int}}^+ = A_1^{-2}r^{-2} + c_2 A_1^{-4}r^{-4}+O\left(r^{-6}\right)+O\left(e^{-\lambda\tau}\right)\quad\text{as }r\nearrow\infty.
\end{align*}
From the exterior region, as $u\searrow 0$, using $u=re^{-\lambda\tau/2}$ and \eqref{eq:zeta_asymp}, we have on any compact $r$-interval,
\begin{align*}
z_{\text{ext}}^+ & = A_2 e^{-\lambda\tau} u^{-2}\left(1-u^2\right)^{\lambda+1} + A_3 e^{-2\lambda\tau} u^{-4}\left(1+O\left(u^2\log u\right)\right)\\
& = A_2 r^{-2} + A_3 r^{-4} + O\left(\tau e^{-\lambda\tau}\right).
\end{align*}
Then on bounded $r$-interval, one has
\begin{align*}
r^2\left(z_{\text{int}}^+ -z_{\text{ext}}^+ \right) & = \left(A_1^{-2} - A_2\right) + \left( c_2A_1^{-4}+O\left(r^{-2}\right)-A_3 \right)r^{-2} + O\left(\tau e^{-\lambda\tau}\right).
\end{align*}
We can choose a constant $\hat C$ so large that for
\begin{align*}
A_3 \geq \hat{C} A_1^{-4} \quad\text{and}\quad A_3 \geq \hat{C}\sqrt{A_2},
\end{align*}
we have
\begin{align*}
\left\vert \frac{c_2A_2}{A_3A_1^{4}} + O\left(\frac{A_2^2}{A_3^2} \right) \right\vert \leq \frac{A_2}{2}.
\end{align*}
Then at $r=R_D$,
\begin{align*}
R_D^2\left(z_{\text{int}}^+ -z_{\text{ext}}^+ \right) & = \left(A_1^{-2} - A_2\right) + \left[ \frac{c_2A_2}{A_3 A_1^4} + O\left(\frac{A_2^2}{A_3^2}\right)-A_2 \right]D^{-2} + O\left(\tau e^{-\lambda\tau}\right)\\
& \leq A_1^{-2} - \left(1+\frac{1}{2}D^{-2}\right) A_2 + O\left(\tau e^{-\lambda\tau}\right),
\end{align*}
and at $r=2R_D$,
\begin{align*}
4R_D^2\left(z_{\text{int}}^+ -z_{\text{ext}}^+ \right) & = \left(A_1^{-2} - A_2\right) + \left[ \frac{c_2A_2}{A_3A_1^4} + O\left(\frac{A_2^2}{A_3^2}\right)-A_2 \right]\frac{D^{-2}}{4} + O\left(\tau e^{-\lambda\tau}\right)\\
& \geq A_1^{-2} - \left(1+\frac{3}{8}D^{-2}\right) A_2 + O\left(\tau e^{-\lambda\tau}\right).
\end{align*}
Now choose $A_1$ and $A_2$ according to \eqref{eq:a1a2}; then the lemma follows for $\tau\geq\tau_5$.
\end{proof}

Lemmata \ref{int_bar}, \ref{ext_bar}, and \ref{patch} allow us to construct barriers for equation \eqref{eq:RFzu}. From now on, we define the function $z^{+}=z^{+}(u,\tau)$ by
\begin{align}\label{eq:zplus}
 z^{+} := \left\{
\begin{array}{ccc}
 z^+_{\text{int}},& \text{if}& 0<u\leq e^{-\lambda\tau/2}R_D , \vspace{5pt} \\ 
 \min\left\{z^+_{\text{int}}, z^+_{\text{ext}}\right\},& \text{if}& e^{-\lambda\tau/2}R_D\leq u\leq 2e^{-\lambda\tau/2}R_D,  \vspace{5pt} \\
 z^+_{\text{ext}},& \text{if}& 2e^{-\lambda\tau/2}R_D\leq u<1,\\
\end{array}
\right.
\end{align}
for $\tau\geq\tau_5$.
We define $z^{-}=z^{-}(u,\tau)$ analogously using $z^{-}_{\text{int}}$ and $z^-_{\text{ext}}$. In particular, for $e^{-\lambda\tau/2}R_D\leq u\leq 2e^{-\lambda\tau/2}R_D$, $z^{-}:=\max\left\{z^-_{\text{int}}, z^-_{\text{ext}}\right\}$.

\begin{lemma}\label{deriv_bdd}
Let $\tau\geq \tau_5$. There exists a constant $K$ independent of $\tau$ such that
\begin{align*}
 \left\vert z^{\pm}_u/u\right\vert,\;\left\vert z^{\pm}_{uu}\right\vert\leq K e^{\lambda\tau}
\end{align*}
at points where $z^{\pm}$ are smooth.
\end{lemma}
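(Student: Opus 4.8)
The plan is to estimate $z^{\pm}$ on each of the three pieces in the definition \eqref{eq:zplus} separately; the two facts that do the work are the parabolic rescaling $r=e^{\lambda\tau/2}u$ used in the interior and the lower bound $u\gtrsim e^{-\lambda\tau/2}$ built into $\Omega_{\text{ext}}$. Since $z^{\pm}$ is smooth on the open parts $0<u<e^{-\lambda\tau/2}R_D$ and $2e^{-\lambda\tau/2}R_D<u<1$, where it equals $z^{\pm}_{\text{int}}$, resp.\ $z^{\pm}_{\text{ext}}$, and since on $e^{-\lambda\tau/2}R_D\leq u\leq 2e^{-\lambda\tau/2}R_D$ it is a pointwise minimum (for $+$) or maximum (for $-$) of $z^{\pm}_{\text{int}}$ and $z^{\pm}_{\text{ext}}$ — so that at any smooth point of that range it agrees in a neighborhood with one of these two — it suffices to prove the asserted bound separately for $z^{\pm}_{\text{int}}$ on $\Omega_{\text{int}}$ and for $z^{\pm}_{\text{ext}}$ on $\Omega_{\text{ext}}$.

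For the interior piece, write $z^{\pm}_{\text{int}}=\mathfrak{B}(A_1^{\pm}r)\pm e^{-\lambda\tau}\beta(r)$ with $r=e^{\lambda\tau/2}u$. Since $\partial_u=e^{\lambda\tau/2}\partial_r$ at fixed $\tau$, one has $z^{\pm}_u/u=e^{\lambda\tau}(z^{\pm}_{\text{int}})_r/r$ and $z^{\pm}_{uu}=e^{\lambda\tau}(z^{\pm}_{\text{int}})_{rr}$, so the claim reduces to a $\tau$-independent (indeed $r$-independent) bound on $|(z^{\pm}_{\text{int}})_r/r|$ and $|(z^{\pm}_{\text{int}})_{rr}|$ over $r\in[0,\infty)$. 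The expansions \eqref{eq:Br0}, \eqref{eq:Bri} show $\mathfrak{B}'(\rho)/\rho$ and $\mathfrak{B}''(\rho)$ are bounded on $(0,\infty)$; differentiating the asymptotics of $\beta$ obtained in the proof of Lemma \ref{z_int} (these come from the linear ODE \eqref{eq:ODE1-1}, whose distinguished regular solution admits termwise-differentiable asymptotic expansions near $r=0$ and $r=\infty$) shows $\beta'(r)/r$ and $\beta''(r)$ are bounded on $(0,\infty)$. The factor $e^{-\lambda\tau}\leq 1$ only helps, and this disposes of the interior piece.

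For the exterior piece, $z^{\pm}_{\text{ext}}=e^{-\lambda\tau}A_2 Z_1(u)\pm e^{-2\lambda\tau}A_3\zeta(u)$. From \eqref{eq:Z1_deriv} together with $\lambda\geq 1$ (so that $(1-u^2)^{\lambda}\leq 1$ and $(1-u^2)^{\lambda-1}\leq 1$) one gets $|Z_1'(u)/u|\lesssim u^{-4}$ and $|Z_1''(u)|\lesssim u^{-4}$ on $(0,1)$; likewise, differentiating the expansions \eqref{eq:zeta_asymp} (using $\lambda\geq 1$ again near $u=1$, where the relevant powers of $1-u^2$ are then nonnegative) yields $|\zeta'(u)/u|\lesssim u^{-6}$ and $|\zeta''(u)|\lesssim u^{-6}$ on $(0,1)$. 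Hence $|z^{\pm}_u/u|,\,|z^{\pm}_{uu}|\lesssim e^{-\lambda\tau}u^{-4}+e^{-2\lambda\tau}u^{-6}$, and on $\Omega_{\text{ext}}$ the inequality $u\geq B_2\sqrt{A_3/A_2}\,e^{-\lambda\tau/2}$ gives $u^{-2}\lesssim e^{\lambda\tau}$, whence $e^{-\lambda\tau}u^{-4}=e^{-\lambda\tau}(u^{-2})^2\lesssim e^{\lambda\tau}$ and $e^{-2\lambda\tau}u^{-6}=e^{-2\lambda\tau}(u^{-2})^3\lesssim e^{\lambda\tau}$. Taking $K$ to be the maximum of the finitely many constants produced above completes the argument.

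I expect the only genuinely delicate point to be this last estimate: one must check that the blow-up of $Z_1$, $\zeta$ and their first two derivatives as $u\searrow 0$ is exactly compensated by the cutoff $u\gtrsim e^{-\lambda\tau/2}$ defining $\Omega_{\text{ext}}$ (so that no net loss in $\tau$ occurs), and one must keep careful track of the powers of $1-u^2$ as $u\nearrow 1$, which is precisely where the hypothesis $\lambda\geq 1$ is used. Everything else is bookkeeping, the main conceptual input being the coordinate change $\partial_u=e^{\lambda\tau/2}\partial_r$ in the interior region.
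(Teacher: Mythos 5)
Your proposal is correct and follows essentially the same route as the paper: split according to the three pieces of \eqref{eq:zplus}, use $\partial_u=e^{\lambda\tau/2}\partial_r$ together with the asymptotics of $\mathfrak{B}$ and $\beta$ in the interior, and in the exterior use the explicit bounds on $Z_1'$, $Z_1''$, $\zeta'$, $\zeta''$ (with $\lambda\geq 1$ controlling the powers of $1-u^2$ near $u=1$) combined with $u\gtrsim e^{-\lambda\tau/2}$ to absorb the $u^{-4}$ and $u^{-6}$ blow-up. The only cosmetic difference is that the paper obtains the bounds on $\zeta'$ from the ODE \eqref{eq:ODE_zeta} rather than by differentiating the asymptotic expansion \eqref{eq:zeta_asymp}, which sidesteps justifying termwise differentiation.
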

\begin{proof}
At a point where $z^{+}$ is smooth, $z^{+}$ is either $z^{+}_{\text{int}}$ or $z^{+}_{\text{ext}}$.

Suppose $z^{+}$ is smooth at $u\in (0, 2e^{-\lambda\tau/2} R_D)$ and $z^{+}=z^{+}_{\text{int}}$, then
\begin{align*}
z^+ & = \mathfrak{B}\left(A_1 r\right) + e^{-\lambda\tau}\beta\left( r \right)\\
& = 1 + C_1 r^2 + o\left( r^2 \right) + e^{-\lambda\tau}\left(r^2+o\left( r^2 \right)\right)\quad \text{as } r\searrow 0,\\
& = 1 + C_1 e^{\lambda\tau} u^2 + e^{\lambda\tau} o\left( u^2 \right) + u^2+o\left( u^2 \right)\quad \text{as } u\searrow 0.
\end{align*}
So then
\begin{align*}
z^+_{u} & = e^{\lambda\tau}\left(C_2 u + o \left(u\right)\right) + u+o\left(u\right) \quad \text{as } u\searrow 0,\\
z^+_{uu} & = e^{\lambda\tau}\left(C_3 + o\left(1\right)\right) + 1+o\left(1\right) \quad \text{as } u\searrow 0.
\end{align*}
Thus, there exists a constant $K_1$ such that for $0< u < 2 e^{-\lambda\tau/2} R_D$,
\begin{align}\label{eq:k1}
 \left\vert z^+_{u}/u \right\vert,\; \left\vert z^+_{uu} \right\vert\leq K_1 e^{\lambda\tau}.
\end{align}

Suppose $z^{+}$ is smooth at $u\in \left(e^{-\lambda\tau/2} R_D, 1 \right)$ and $z^{+}=z^{+}_{\text{ext}}$, then
\begin{align*}
 z^+ & = e^{-\lambda\tau}A_2 Z_1(u) +  e^{-2\lambda\tau} A_3 \zeta(u),
\end{align*}
where $Z_1(u) = u^{-2}\left(1-u^2\right)^{\lambda+1}$ for $\lambda\geq1$, and $\zeta\left(u\right)$ is a smooth solution to the ODE \eqref{eq:ODE_zeta}. So then
\begin{align*}
 \left\vert z^{+}_u/u \right\vert &\lesssim e^{-\lambda\tau}\left\vert Z_1'/u \right\vert + e^{-2\lambda\tau}\left\vert\zeta'/u\right\vert,\\
 \left\vert z^{+}_{uu} \right\vert &\lesssim e^{-\lambda\tau} \left\vert Z_1''\right\vert+e^{-2\lambda\tau}\left\vert \zeta''\right\vert.
\end{align*}
From the definition of $Z_1$, we compute
\begin{align}
 Z_1'/u & = -2\left( u^{-4} + \lambda u^{-2} \right)\left(1-u^2\right)^{\lambda}, \label{eq:Z1'}\\
 Z_1'' & = 2 \left(1-u^2\right)^{\lambda-1} \left[ 3u^{-4} + 3\left(\lambda-1\right)u^{-2} + \left(2\lambda-1\right)\lambda \right]. \label{eq:Z1''}
\end{align}
From equation \eqref{eq:ODE_zeta}, we have
\begin{align*}
 -\frac{1}{2}\zeta' = \frac{\left(1-u^2\right)^{2\lambda-1}}{u^5} + \frac{u^{-1} + \lambda u}{\left(1-u^2\right)}\zeta.
\end{align*}
Then using \eqref{eq:zeta_asymp} we obtain, writing $v:=1-u^2$,
\begin{align}\label{eq:zeta'}
 \left\vert \zeta'/u \right\vert \lesssim \left\{
\begin{array}{cc}
u^{-6} + O\left(u^{-4}\log u\right)& \text{as } u\searrow 0, \\
v^{2\lambda-1} + O\left(v^{2\lambda}\log v \right)& \text{as } u\nearrow 1,
\end{array}\right.
\end{align}
and similarly,
\begin{align}\label{eq:zeta''}
 \left\vert \zeta'' \right\vert \lesssim \left\{
\begin{array}{cc}
u^{-6} + O\left(u^{-4}\log u\right)& \text{as } u\searrow 0, \\
v^{2(\lambda-1)} + O\left( v^{2\lambda-1}\log v\right)& \text{as } u\nearrow 1.
\end{array}\right.
\end{align}
Thus, by \eqref{eq:Z1'}--\eqref{eq:zeta''}, there exist constants $K_2, K_3$ such that for $e^{-\lambda\tau/2} R_D<u<1$,
\begin{align*}
 \left\vert z^{+}_u/u \right\vert \leq K_2 e^{\lambda\tau},\quad \left\vert z^{+}_{uu} \right\vert \leq K_3 e^{\lambda\tau}.
\end{align*}

Choose $K=\max\left\{K_1,K_2,K_3\right\}$, then the lemma is true for $z^{+}$. The proof for $z^{-}$ is similar.
\end{proof}

We can now prove Proposition \ref{barriers}.
\begin{proof}[Proof of Proposition \ref{barriers}]
Since $\lim\limits_{u\searrow 0} z^{\pm}_{\text{int}}=1$, $z^{\pm}_{\text{int}}>0$ on $0< r\leq 2R_D$ for sufficiently small $D$. Since $Z_1(u)\geq0$, there exists a sufficiently large $\tau_0\geq\tau_5$ such that $z^{\pm}_{\text{ext}}>0$ on $e^{-\lambda\tau/2}R_D\leq u <1$. Thus, $z^{\pm}$ are positive piecewise smooth functions for $0<u<1$ and $\tau\geq\tau_0$. The minimum (maximum) of two supersolutions (subsolutions) is still a supersolution (subsolution), so (B1) is true. One verifies (B2) and (B3) directly using the definition of $z^{\pm}$ and the properties of $z^{\pm}_{\text{int}}$ and $z^{\pm}_{\text{ext}}$. (B4) follows from Lemma \ref{deriv_bdd}.
\end{proof}


\section{Existence and uniqueness of complete solutions}
We first prove a comparison principle for equation \eqref{eq:RFzu}. Similar results have appeared in \cites{ACK12, M12}.
\begin{lemma}\label{compare}
Let $\bar\tau\in[\tau_0,\infty)$ be arbitrary. Let $z^{\pm}$ be two nonnegative sub-($-$) and super- ($+$) solutions of equation \eqref{eq:RFzu} respectively. Suppose there exists a constant $K$ such that either $\left\vert z_{u}^{-}/u \right\vert$ and $\left\vert z_{uu}^{-} \right\vert$, or $\left\vert z_{u}^{+}/u\right\vert$ and $\left\vert z_{uu}^{+} \right\vert$, are bounded by $K e^{\lambda\tau}$. Moreover, assume
\begin{enumerate}
 \item[(C1)] $z^-\left(u,\tau_0\right) < z^{+}\left(u,\tau_0\right)$ for $0 < u < 1$;
 \item[(C2)] $z^{-}\left(0,\tau\right) \leq z^{+}\left(0,\tau\right)$, and $z^{-}\left(1,\tau\right) \leq z^{+}\left(1,\tau\right)$ for all $\tau\in[\tau_0,\bar\tau]$.
\end{enumerate}
Then $z^{-}\left(u,\tau\right)\leq z^{+}\left(u,\tau\right)$ in $[0,1]\times[\tau_0,\bar\tau]$.
\end{lemma}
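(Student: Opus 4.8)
The plan is to prove the comparison principle by a standard maximum-principle argument applied to the difference $w := z^{-} - z^{+}$, but with the twist that equation \eqref{eq:RFzu} is quasilinear and only degenerate-parabolic near $u=0$, so I will need to add a small perturbation to get strict inequalities and then pass to the limit. First I would record that since $z^{-}$ is a subsolution and $z^{+}$ a supersolution of $\mathcal{D}_u[z]=0$ (equivalently of \eqref{eq:RFzu}), subtracting the two differential inequalities and using the bilinearity of $\hat{\mathcal{Q}}_u$ to write $\mathcal{Q}_u[z^-]-\mathcal{Q}_u[z^+]=\hat{\mathcal{Q}}_u[z^-+z^+,\,w]$, the difference $w$ satisfies a linear (in $w$) parabolic differential inequality of the form
\begin{align*}
 \partial_\tau w \leq a(u,\tau)\, w_{uu} + b(u,\tau)\, w_u + c(u,\tau)\, w,
\end{align*}
where $a = \tfrac{1}{2(n-1)}\,\bar z$ with $\bar z$ between $z^-$ and $z^+$ (hence $a\geq 0$), and $b,c$ involve the first and second $u$-derivatives of $z^{\pm}$; the hypothesis that one of the pairs $|z^\pm_u/u|,|z^\pm_{uu}|$ is bounded by $Ke^{\lambda\tau}$ is exactly what is needed to keep $b,c$ controlled (in particular $b$ has the schematic size $Ke^{\lambda\tau}u$ near $u=0$, which is integrable against the geometry).

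Next I would run the perturbed maximum principle on the compact rectangle $[0,1]\times[\tau_0,\bar\tau]$. Set $w_\epsilon := w - \epsilon\,e^{\Lambda\tau}$ for a large constant $\Lambda$ (to be chosen depending on the sup of $c$ and on the $b$-bound) and $\epsilon>0$; by (C1) we have $w_\epsilon<0$ on the bottom face $\tau=\tau_0$, and by (C2) we have $w_\epsilon\leq -\epsilon e^{\Lambda\tau}<0$ on the two side faces $u=0$ and $u=1$. Suppose, for contradiction, that $w_\epsilon$ attains a nonnegative value somewhere; then it attains a positive interior maximum at some $(u_\ast,\tau_\ast)$ with $u_\ast\in(0,1)$, $\tau_\ast\in(\tau_0,\bar\tau]$. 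At that point $w_u=\epsilon e^{\Lambda\tau_\ast}(w_\epsilon)_u$-type relations give $w_u = 0$... more carefully, at the max of $w_\epsilon$ we have $(w_\epsilon)_u=0$, $(w_\epsilon)_{uu}\leq 0$, and $(w_\epsilon)_\tau\geq 0$, hence $w_u=0$, $w_{uu}\leq 0$, $w_\tau \geq \epsilon\Lambda e^{\Lambda\tau_\ast}$. Plugging into the differential inequality for $w$ and using $a\geq0$, $w_{uu}\leq0$, $w_u=0$, and $w(u_\ast,\tau_\ast) = w_\epsilon(u_\ast,\tau_\ast)+\epsilon e^{\Lambda\tau_\ast}>\epsilon e^{\Lambda\tau_\ast}>0$, I get
\begin{align*}
 \epsilon\Lambda e^{\Lambda\tau_\ast} \leq w_\tau \leq a\,w_{uu} + c\,w \leq c(u_\ast,\tau_\ast)\,w(u_\ast,\tau_\ast),
\end{align*}
which, choosing $\Lambda > \sup c$ over the rectangle (finite, since $z^\pm$ are piecewise smooth up to the parabolic boundary and the derivative bound controls $c$ away from degeneracy; I should be a little careful that $c$ really is bounded — this uses the explicit form of the zeroth-order coefficient, namely $u^{-2} + \tfrac{1}{2(n-1)}\hat{\mathcal Q}$-type terms, which near $u=0$ must be handled by noting $w$ and $w_u/u$ are themselves controlled there, or equivalently by working in the $r$-variable where the equation is uniformly parabolic), yields a contradiction once $w(u_\ast,\tau_\ast)\leq$ something comparable to $\epsilon e^{\Lambda\tau_\ast}$...

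Actually the cleanest route, and the one I would adopt, is to not fight the zeroth-order term at all: since $w<0$ on the whole parabolic boundary by (C1)--(C2), and since $w_\epsilon\to w$ uniformly as $\epsilon\to0$, it suffices to show $w\leq 0$; and for that, replace the contradiction hypothesis by ``the first time $\tau_\ast$ at which $\sup_u w(\cdot,\tau)=0$'' and derive a contradiction at the corresponding interior spatial maximum using $\partial_\tau w\geq 0$ there together with $w_u=0$, $w_{uu}\leq 0$, $w=0$, which forces $0\leq \partial_\tau w \leq a\cdot 0 + b\cdot 0 + c\cdot 0 = 0$ — not yet a contradiction, so one does need the $\epsilon$-shift after all to make the inequality strict; hence I keep the $w_\epsilon = w - \epsilon e^{\Lambda\tau}$ device with $\Lambda$ chosen to dominate the (bounded) zeroth-order coefficient, conclude $w_\epsilon<0$ on $[0,1]\times[\tau_0,\bar\tau]$ for every $\epsilon>0$, and let $\epsilon\searrow0$ to get $w\leq 0$, i.e. $z^{-}\leq z^{+}$. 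The main obstacle, and the point deserving the most care, is the degeneracy of the equation at $u=0$: the coefficient $a = \tfrac{1}{2(n-1)}\bar z$ stays bounded away from zero there (since $z^\pm\to 1$), so parabolicity is in fact uniform near $u=0$ provided one also controls $b=-\tfrac12(u^{-1}-u)+\ldots$ which blows up like $u^{-1}$; this is precisely why the hypothesis is phrased in terms of $|z^\pm_u/u|$ rather than $|z^\pm_u|$, and why one should either carry out the argument in the $r$-coordinate on $\Omega_{\text{int}}$ (where $\mathcal{T}_r$ is manifestly non-degenerate and the boundary $r=0$ is handled by smoothness of the rotationally symmetric metric) and in the $u$-coordinate on $\{u\geq\delta\}$ separately, or absorb the $u^{-1}w_u$ term using $w_u(u_\ast,\tau_\ast)=0$ at the maximum so that the singular coefficient never actually acts. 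I would present it via the single $w_\epsilon$ barrier with $\Lambda=\Lambda(n,K,\sup|z^\pm|)$, noting that at an interior max the first-order term drops out and the bad coefficient is harmless, which is the same mechanism used in \cite{ACK, M12}.
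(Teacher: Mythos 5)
Your overall strategy is the same as the paper's: an interior first-contact argument in which $w_u=0$ at the contact point neutralizes the singular first-order coefficient, the sign of $w_{uu}$ neutralizes the principal term, and an exponential-in-$\tau$ perturbation whose logarithmic derivative dominates the $Ke^{\lambda\tau}$ coefficient bound yields the strict inequality. (The paper's multiplicative weight $e^{-\mu e^{\lambda\tau}}$ plus additive $\varepsilon$ is just a $\bar\tau$-uniform variant of your $\epsilon e^{\Lambda\tau}$; on the compact interval $[\tau_0,\bar\tau]$ either device works.)

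There is, however, a gap in your handling of the zeroth-order coefficient $c$. The symmetric splitting $\mathcal{Q}_u[z^-]-\mathcal{Q}_u[z^+]=\hat{\mathcal{Q}}_u[z^-+z^+,\,w]$ places $\tfrac{1}{4(n-1)}(z^-+z^+)_{uu}$ and $-\tfrac{1}{4(n-1)u}(z^-+z^+)_u$ into $c$, but the hypothesis bounds the derivatives of only \emph{one} of $z^{\pm}$; hence $c$ is not a bounded function on the rectangle and the step ``choose $\Lambda>\sup c$'' is not available. (In addition, $c$ contains $(1-z^+-z^-)/u^{2}$, which you flag but never dispose of and which is not controlled by nonnegativity alone near $u=0$; the paper's own proof is also thin on this point.) The repair is to bound $c$ only \emph{at the contact point}, using the contact conditions there: $z^-_u=z^+_u$ turns $(z^-+z^+)_u$ into $2z^+_u$, and $w_{uu}\le 0$ gives $z^-_{uu}\le z^+_{uu}$, so $(z^-+z^+)_{uu}\le 2z^+_{uu}\le 2Ke^{\lambda\tau}$ --- only the derivatives of the controlled function enter. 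This is exactly what the paper achieves with the asymmetric splitting $z^+z^+_{uu}-z^-z^-_{uu}=(z^+-z^-)z^+_{uu}+z^-\left(z^+_{uu}-z^-_{uu}\right)$, where the last term is discarded by sign. You apply this pointwise reasoning to the first-order term ($w_u=0$ kills the $u^{-1}$ coefficient) but not to the zeroth-order one; as written, $\Lambda$ cannot be chosen and the contradiction is not obtained, whereas evaluating $c$ at the contact point as above closes the argument.
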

\begin{remark}
In this lemma, we assume $z^{\pm}$ are smooth. The result also holds for piecewise smooth $z^{\pm}$. When applying the comparison principle, we will only evaluate $z^{\pm}$ at ``points of first contact with a given smooth function'' which are necessarily smooth points of $z^{\pm}$ for each $\tau\geq\tau_0$.
\end{remark}
\begin{proof}[Proof of Lemma \ref{compare}]
Suppose $\left\vert z^{+}_u/u \right\vert, \left\vert z^{+}_{uu}\right\vert \leq K e^{\lambda\tau}$. For $\mu>0$ to be chosen and arbitrary $\varepsilon>0$, define a function
\begin{align*}
 w := e^{-\mu e^{\lambda\tau}}\left(z^+ - z^- \right) + \varepsilon.
\end{align*}
Then $w>0$ on the parabolic boundary of the evolution by assumptions (C1) and (C2). We claim that $w>0$ in $(0,1)\times[\tau_0,\bar\tau]$. Suppose the contrary, then there must be an interior point $u_\ast$ and a first time $\tau_\ast$ such that $w(u_\ast,\tau_\ast)=0$ and $w_\tau(u_\ast,\tau_\ast)\leq 0$. Moreover, at $(u_\ast,\tau_\ast)$, we have
\begin{align*}
 z^{+} = z^{-} - \varepsilon e^{-\mu e^{\lambda\tau_\ast}},\quad z^{+}_u = z^{-}_u, \quad z^{+}_{uu}\geq z^{-}_{uu}.
\end{align*}
Then at $(u_\ast,\tau_\ast)$, 
\begin{align*}
0  & \geq  e^{\mu e^{\lambda\tau_\ast}} w_\tau \\
& = \left(z^{+}_\tau- z^{-}_\tau \right) - \lambda\mu e^{\lambda\tau_\ast}\left(z^+ - z^-\right)\\
& \geq \left(z^+ - z^- \right)\left(u^{-2}-\lambda\mu e^{\lambda\tau_\ast}\right) + \frac{\mathcal{Q}_u[z^+]-\mathcal{Q}_u[z^-]}{2(n-1)}\\
& = \left(z^- - z^+\right)\left\{\lambda\mu e^{\lambda\tau_\ast} + \frac{\left(z^+_u/u \right) -z^+_{uu} }{2(n-1)} + \frac{z^+ + z^- - 1}{u^2} \right\} + z^-\left(z^+_{uu} - z^-_{uu} \right)\\
& \geq  \varepsilon e^{-\mu e^{\lambda\tau_\ast}}\left\{\lambda\mu e^{\lambda\tau_\ast} - \frac{K e^{\lambda\tau_\ast}}{(n-1)} - \frac{1}{u^2_\ast} \right\}.
\end{align*}
Choose $\mu$ so large that $\lambda\mu > K/(n-1)+u^{-2}_\ast e^{-\lambda\tau_\ast}$. Then at $(u_\ast,\tau_\ast)$,
\begin{align*}
 0  \geq w_\tau > 0,
\end{align*}
which is a contradiction. This proves the lemma in the case $\left\vert z^{+}_u/u\right\vert, \left\vert z^{+}_{uu}\right\vert\leq K e^{\tau}$.

The case when $\left\vert z^{-}_u/u \right\vert, \left\vert z^{-}_{uu} \right\vert\leq K e^{\lambda\tau}$ is proved analogously because at the interior first contact point $\left(u_\ast,\tau_\ast\right)$, we have
\begin{align*}
 e^{\mu e^{\lambda\tau_\ast}} w_\tau & = \left(z^{-} - z^{+}\right)\left\{\mu\lambda e^{\lambda\tau_\ast} + \frac{\left(z^-_u/u\right) -z^{-}_{uu} }{2(n-1)} + \frac{z^+ + z^{-} - 1}{u^2} \right\} \quad + z^+\left(z^+_{uu} - z^-_{uu} \right).
\end{align*}

Therefore, the lemma is proved.
\end{proof}

Now for any solution $z$ of equation \eqref{eq:RFzu} we have the following.
\begin{lemma}\label{complete}
Suppose $0< z\leq z^{+}$. If $\lambda\geq 1$, then $z$ determines a \emph{complete} rotationally symmetric metric $g:=z^{-1} d\psi^2+\psi^2 g_{\text{sph}}$ on $\RR^N$.
\end{lemma}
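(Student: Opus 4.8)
The plan is to reduce geodesic completeness of $g$ to the divergence of a single improper integral and then estimate that integral using the upper barrier near spatial infinity. Recall from \eqref{eq:g(s)} that in the arclength coordinate $g = ds^2 + \psi^2 g_{\text{sph}}$ with $ds = z^{-1/2}\,d\psi$, which is legitimate since $z=\psi_s^2>0$ by hypothesis, and that $\psi=\sqrt{2(n-1)(T-t)}\,u$, so at a fixed time the distance from the origin to the level $\{u\}$ is
\[
 s(u)=\sqrt{2(n-1)(T-t)}\int_0^u z(\tilde u,\tau)^{-1/2}\,d\tilde u.
\]
Since $z$ solves \eqref{eq:RFzu} it satisfies the boundary condition $z\to1$ as $u\searrow0$ built into \eqref{eq:g(z)}; together with $0<z\le z^{+}_{\text{int}}$ near the origin this makes $g$ a smooth rotationally symmetric metric on $\RR^{n+1}$ with the origin a regular point at finite distance. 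As $\RR^{n+1}$ has a single end, reached as $u\nearrow1$, the metric $g$ is complete precisely when $s(u)\to\infty$ as $u\nearrow1$.

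Next I would localize near $u=1$. By (B3), $z^{+}=z^{+}_{\text{ext}}=e^{-\lambda\tau}A_2Z_1(u)+e^{-2\lambda\tau}A_3\zeta(u)$ there, with $Z_1(u)=u^{-2}(1-u^2)^{1+\lambda}$ and, by \eqref{eq:zeta_asymp}, $|\zeta(u)|\le C(1-u^2)^{2\lambda}$ as $u\nearrow1$. Since $\lambda\ge1$ we have $2\lambda\ge1+\lambda$, hence $(1-u^2)^{2\lambda}\le(1-u^2)^{1+\lambda}$ near $u=1$, so there are $u_0\in(0,1)$ and a constant $C=C(\tau)$ with $0<z(u,\tau)\le z^{+}(u,\tau)\le C(1-u^2)^{1+\lambda}$ for $u_0\le u<1$. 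Therefore $z^{-1/2}\ge C^{-1/2}(1-u^2)^{-(1+\lambda)/2}$ on $[u_0,1)$.

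Finally I would split $s(u)=s(u_0)+\sqrt{2(n-1)(T-t)}\int_{u_0}^u z^{-1/2}\,d\tilde u$. The first term is finite because $z$ is continuous and positive on $(0,u_0]$ and tends to $1$ as $u\searrow0$, so $z^{-1/2}$ is bounded on $(0,u_0]$; and
\[
 \int_{u_0}^u z^{-1/2}\,d\tilde u\ \ge\ C^{-1/2}\int_{u_0}^u(1-\tilde u^2)^{-(1+\lambda)/2}\,d\tilde u\ \xrightarrow[\ u\nearrow1\ ]{}\ +\infty,
\]
since $1-\tilde u^2\sim 2(1-\tilde u)$ near $1$ and $(1+\lambda)/2\ge1$ exactly because $\lambda\ge1$. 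Hence $s(u)\to\infty$ as $u\nearrow1$ and $g$ is complete.

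The computation is routine; the point deserving care, and the actual content of the lemma, is that completeness at spatial infinity is forced by precisely the borderline decay $z\lesssim(1-u^2)^{1+\lambda}$ supplied by the exterior barrier, so that the hypothesis $\lambda\ge1$ is exactly what makes the distance integral diverge — for $\lambda<1$ the same estimate would instead exhibit an end at finite distance. One should also note that only the one-sided bound $z\le C(1-u^2)^{1+\lambda}$, together with $z>0$, is used, so positivity of $z^{+}_{\text{ext}}$ near $u=1$ need not be checked separately.
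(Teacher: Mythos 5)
Your proof is correct and follows essentially the same route as the paper: both reduce completeness to the divergence of $\int^1 z^{-1/2}\,du$ as $u\nearrow1$ and bound $z\leq z^{+}=z^{+}_{\text{ext}}\lesssim(1-u^2)^{1+\lambda}$ near $u=1$, so that $(1+\lambda)/2\geq1$ forces the distance integral to diverge. Only your closing aside is imprecise: an upper bound on $z$ yields only a lower bound on the distance, so for $\lambda<1$ the same estimate would merely fail to prove completeness rather than ``exhibit an end at finite distance.''
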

\begin{proof}
By definition $g$ is rotationally symmetric. To see that $g$ is a complete metric, it suffices to show that any radial geodesic $\eta$ starting from the origin has infinite length in the $s$-coordinate. The length of $\eta$ in $s$-coordinate is a function of $u$ and $\tau$ given by
\begin{align*}
 s(u,\tau) = e^{-\tau/2} \sigma(u) = e^{-\tau/2} \int_{0}^u \frac{d\sigma}{d \hat u} d \hat u.
\end{align*}
Since $z = \psi_s^2 = 2\left(n-1\right) u_\sigma^2$, and $0< z \leq z^{+}$ by hypothesis, we have
\begin{align*}
\frac{\sigma(u)}{\sqrt{2(n-1)}} & \geq \int_{u_0}^u \frac{1}{\sqrt{z}}d \hat u  \geq  \int_{u_0}^u \frac{1}{\sqrt{z^{+}}}d \hat u.
\end{align*}
As $u\nearrow 1$,
\begin{align*}
z^{+}_{\text{ext}} & = e^{-\lambda\tau} A_2 u^{-2}\left(1-u^2\right)^{1+\lambda} + e^{-2\lambda\tau} A_3 \zeta(u)\\
&  =  e^{-\lambda \tau}A_2 u^{-2}\left(1-u^2\right)^{\lambda+1} + e^{-2\lambda\tau}A_3\left\{-\left(1-u^2\right)^{2\lambda} + O\left(\left(1-u^2\right)^{2\lambda+1}\log\left(1-u^2\right)\right)\right\}.
\end{align*}
So for $u_0$ near $1$ and $\tau_0$ sufficiently large, $z^{+}=z^{+}_{\text{ext}}$ in $[u_0,1)\times[\tau_0,\infty)$ with
\begin{align*}
z^{+}_{\text{ext}} & \leq e^{-\lambda\tau} u^{-2}\left(1-u^2\right)^{1+\lambda} \left(\frac{3A_2}{2}\right).
\end{align*}
It follows that
\begin{align*}
\frac{s(u,\tau)}{\sqrt{2(n-1)}} & \geq e^{-\tau/2} \int_{u_0}^u \frac{1}{\sqrt{z^{+}}}d \hat u\\
& = e^{-\tau/2} \int_{u_0}^u \frac{1}{\sqrt{z^{+}_{\text{ext}}}}d \hat u\\
& \geq \sqrt{\frac{3A_2}{2}} e^{\lambda\tau/2} \int_{u_0}^u \frac{\hat u}{\left(1-\hat{u}^2\right)^{(1+\lambda)/2}} d\hat u.
\end{align*}
Hence,
\begin{align*}
\frac{s(u,\tau)}{\sqrt{3(n-1)A_2}} & \geq \left\{
\begin{array}{cc}
\log\left(1-u_0^2\right) - \log\left(1-u^2\right), & \lambda=1, \vspace{5pt} \\
\frac{e^{(\lambda-1)\tau/2}}{(\lambda-1)}\left\{\left(1-u^2\right)^{\left(1-\lambda\right)/2} -\left(1-u_0^2\right)^{(1-\lambda)/2}\right\}, &\lambda>1.
\end{array}
\right.
\end{align*}
Therefore, for each $\tau\geq\tau_0$, $\lim\limits_{u\nearrow 1} s(u,\tau) = \infty$, and so the lemma is proved.
\end{proof}

We are now ready to prove our main result.
\begin{proof}[Proof of Theorem \ref{thm1}]
Let $N\geq 3$, $\lambda\geq 1$, and $A_1^{-}, A_1^{+}>0$ with $A_1^{-}>A_1^{+}$. Choose $A_2^{\pm}$ according to \eqref{eq:a1a2}. Let $\hat{z}_0$ be the function obtained by patching together $\mathfrak{B}(\tilde A_1r)$ and $\tilde{A}_2 Z_1(u)$. Because $z^{-}(u,\tau_0)<z^{+}(u,\tau_0)$, we can smooth out $\hat{z}_0$ to obtain a smooth initial profile function $z_0$ with $0<z^{-}(u,\tau_0)< z_0 < z^{+}(u,\tau_0)$ for $0<u<1$. By Lemma \ref{complete}, $z_0$ determines a complete rotationally symmetric metric $g_0$ on $\RR^N$.

It is straightforward to check that $g_0$ has bounded sectional curvatures everywhere. In particular, the sectional curvatures $K_0, L_0$ of $g_0$ at the origin $O$ can be computed by
\begin{align*}
\left. K_0 \right\vert_{O} = \left. L_0 \right\vert_{O} = \lim\limits_{x\searrow 0} \frac{1-\psi^2_s}{\psi^2}=\lim\limits_{r\searrow 0}\frac{1-z_0}{r^2}e^{(\lambda+1)\tau_0}.
\end{align*}
So using the barriers $z^{\pm}(u,\tau_0)$ and that $e^{(\lambda+1)\tau_0} = (T-t_0)^{-(\lambda+1)}$, we can find constants $C^{\pm} = C(N,A_1^{\pm})$ such that
\begin{align}\label{eq:C-pm}
 \frac{C^{+}}{(T-t_0)^{\lambda+1}} \leq \left. K_0 \right\vert_{O} = \left. L_0 \right\vert_{O} \leq  \frac{C^{-}}{(T-t_0)^{\lambda+1}}. 
\end{align}
Since the sectional curvatures depend smoothly on the metric, there is a neighborhood $\CMcal{G}_N$ of $g_0$ in $C^2$ topology that corresponds to an open set of $z$'s around $z_0$, all of which lie between $z^{-}(u,\tau_0)$ and $z^{+}(u,\tau_0)$ and determine complete rotationally symmetric metrics on $\RR^N$ for which \eqref{eq:C-pm} is satisfied.

Now let $g_0\in\CMcal{G}_N$ be arbitrary. There exists a unique solution $g(t)$ to Ricci flow for $t\in[0, T_0)$ with $g(0)=g_0$ \cites{Shi89-1, ChZh06}. By expression \eqref{eq:g(s)}, $g_0$ corresponds to a function $\psi$ with $\psi(s,0)<r_0$ for some constant $r_0>0$. Since the metric $\tilde g_{t} = ds^2 + \tilde\psi(t)^2 g_{\text{sph}}$ with $\tilde\psi(0)\equiv r_0$ is the shrinking cylinder solution to Ricci flow on $\RR\times S^n$, $\psi(t)\leq \tilde\psi(t)$ where $\tilde\psi(t)\searrow 0$ in finite time. So $g(t)$ encounters a global singularity.

The profile $z(u,\tau)$ of $g(t)$ is the unique solution of equation \eqref{eq:RFzu} for $0<u<1$ and $\tau\geq\tau_0$, with boundary conditions $z(0,\tau) = 1$ and $z(1,\tau)=0$, and initial condition $z(u,\tau_0) = z_0$. The barriers $z^{\pm}$ satisfy the hypotheses of Lemma \ref{compare}, so by the comparison principle $z^{-}\leq z(u,\tau)\leq z^{+}$ for $\tau_0\leq\tau<\infty$. So for $0\leq t < T=e^{-\tau_0}$, the metric $g(t)$ corresponding to $z(u,\tau)$ is a complete metric on $\RR^N$ by Lemma \ref{complete}. Moreover, one checks using the barriers that the sectional curvatures $K(t), L(t)$ of $g(t)$ at the origin $O$ satisfy
\begin{align*}
\frac{C^{+}}{(T-t)^{\lambda+1}} \leq \left. K\left(t\right) \right\vert_{O} = \left. L\left(t\right) \right\vert_{O} \leq  \frac{C^{-}}{(T-t)^{\lambda+1}}.
\end{align*}
So part (1) of Theorem \ref{thm1} is proved.

Since $z^{-}\leq z(u,\tau)\leq z^{+}$ for any $\tau<\infty$, the solution $z(u,\tau)$ exhibits the asymptotic behavior of $z^{\pm}$. Near the origin, $z(u,\tau)$ converges uniformly to the Bryant soliton profile function for $0<u<R_D e^{-\lambda\tau}$ and $\tau\nearrow\infty$. Near spatial infinity, $u\nearrow 1$ while $z(u,\tau)\searrow 0$. Thus, $g(t)$ has the asymptotic behavior described in parts (2) and (3) of Theorem \ref{thm1}. Lastly, the asymptotic behavior of the solution agrees with that of the barriers, and hence with that of the formal solution, so part (4) of the theorem is true.
\end{proof}


\section{Relation to the standard solutions}\label{stansolsect}
In \cite{P03-1}, Perelman described a special family of Ricci flow solutions, the so-called standard solutions, on $\RR^3$. These solutions are complete rotationally symmetric with nonnegative sectional curvature, and split at infinity as the metric product of a ray and the round $S^2$ of constant scalar curvature.

Consider a rotationally symmetric metric $g_0$ on $\RR^N$ ($N\geq 3$) with the following properties:
\begin{enumerate}
\item[(P1)] $\Rm_{g_0}\geq 0$ everywhere with $\Rm_{g_0}>0$ at some point.
\item[(P2)] The curvature $\left\vert \Rm_{g_0}\right\vert$ and its derivatives $\left\vert \nabla^i \Rm_{g_0}\right\vert$, $i=1,2,3,4$, are bounded.
\item[(P3)] There is a sequence of points $y_k\to\infty$ in $\RR^N$ such that $\left(\RR^N, g_0, y_k\right)$ converges in $C^3$ pointed Cheeger-Gromov topology to $\RR\times S^n(r_0)$ for some constant $r_0>0$.
\end{enumerate}
Following \cite{LT}, a Ricci flow solution $g(t)$ whose initial condition satisfies (P1)--(P3) is called a standard solution. A standard solution of Ricci flow is unique up to the first singular time \cites{LT, ChZh06}.

\begin{lemma}\label{stansol}
Let $\CMcal{G}_N$ be given as in Theorem \ref{thm1}. There is an open (in $C^6$ topology) set $\CMcal{G}^\ast_N\subset \CMcal{G}_N$ of metrics that satisfy properties (P1)--(P3).
\end{lemma}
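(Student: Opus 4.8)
The plan is to exhibit an explicit member $g^0\in\CMcal{G}_{n+1}$ satisfying (P1)--(P3), then invoke openness of these conditions in the $C^2$-topology to pass to a whole open subset. The natural candidate is the metric $g^0$ constructed in the proof of Theorem \ref{thm1}: recall it arises from a profile $z_0$ obtained by patching $\mathfrak B(\tilde A_1 r)$ (the Bryant soliton profile, scaled) near the origin to $\tilde A_2 Z_1(u)$ near spatial infinity, then smoothing, with $z^-(u,\tau_0)<z_0<z^+(u,\tau_0)$. Since $z_0>0$ everywhere, the metric is complete (Lemma \ref{complete}), rotationally symmetric, and --- after the smoothing --- smooth. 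The three properties must now be verified in turn for this (or a nearby) representative.

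First I would check (P3), the cylindrical asymptotics at spatial infinity. In the exterior region $z_0$ coincides (up to the smoothing, which can be arranged to be supported away from a neighborhood of $u=1$) with $\tilde A_2 Z_1(u)=\tilde A_2 u^{-2}(1-u^2)^{1+\lambda}$, so as $u\nearrow 1$ one has $z_0\searrow 0$, i.e. $\psi_s\to 0$, while $\psi\to r_0$ for the appropriate constant $r_0$ built into the change of variables $u=\psi/\sqrt{2(n-1)(T-t)}$ at $t=0$. This is precisely the statement that, along any sequence $y_k\to\infty$, the pointed manifolds converge in $C^\infty_{\mathrm{loc}}$ Cheeger--Gromov topology to $\RR\times S^n(r_0)$; in particular the $C^3$ convergence in (P3) holds. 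One records the precise value of $r_0$ in terms of $\tilde A_2$ and $\tau_0$ here.

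Next (P2): since $z_0$ is smooth, positive, and has the controlled behavior $z_0=1+O(r^2)$ near the origin (from the Bryant expansion \eqref{eq:Br0}) and $z_0=O((1-u^2)^{1+\lambda})$ near infinity, the sectional curvatures $K=-\psi_{ss}/\psi$ and $L=(1-\psi_s^2)/\psi^2$ and their first four covariant derivatives are bounded; this is a routine check in the $u$- or $r$-coordinate using the expansions already recorded in Sections \ref{formal}--\ref{subsuper}, together with the fact that the smoothing is performed on a compact $u$-interval. The genuinely delicate property is (P1): one needs $\Rm_{g^0}\ge 0$ everywhere, with strict positivity somewhere. Near the origin $g^0$ is $C^\infty$-close to a scaled Bryant soliton, which has \emph{positive} curvature operator (Bryant \cite{Bryant}, \cite{CLN06}), so positivity holds there and is stable under small perturbation; near infinity the metric is close to the round cylinder, which has $\Rm\ge 0$, and along the exterior region $Z_1(u)\le 1$ ensures $\psi_s^2\le 1$ hence $L\ge 0$, while monotonicity $Z_1'\le 0$ (i.e. $z_0$ decreasing in $u$) gives $\psi_{ss}\le 0$ hence $K\ge 0$. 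The intermediate/patching region is where this must be checked by hand: there $z_0$ is a convex combination (via the smoothing cutoff) of two profiles each satisfying $0<z_0<1$ and $z_{0,u}\le 0$, and one arranges the smoothing so that these inequalities --- which for rotationally symmetric warped products are equivalent to $K,L\ge 0$ --- are preserved. I expect this patching-region curvature-sign verification to be the main obstacle, since it is not automatic that a naive interpolation of two nonnegatively curved profiles is again nonnegatively curved; one may need to choose the cutoff carefully (monotone, with controlled derivatives) and possibly shrink $D$ or enlarge $\tau_0$, as in Lemma \ref{patch}.

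Finally, having produced one $g^0$ satisfying (P1)--(P3), I would note that each condition is open: (P1) because $\Rm$ depends continuously (indeed smoothly) on the metric and strict positivity at a point plus nonnegativity on the compact patching region persist under $C^2$-small perturbations (away from that region the Bryant and cylinder models give a definite margin), (P2) because the relevant curvature norms depend continuously on the $C^6$-jet of the metric and hence on a $C^2$-neighborhood after one fixes that the perturbations are supported in a fixed compact set, and (P3) because the perturbations in $\CMcal{G}_{n+1}$ can be taken to agree with $g^0$ outside a fixed compact region, leaving the spatial-infinity asymptotics untouched. Intersecting these open sets with $\CMcal{G}_{n+1}$ yields the desired open set $\CMcal{G}^\ast_{n+1}$, completing the proof.
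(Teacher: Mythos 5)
Your proposal follows essentially the same route as the paper: verify (P1)--(P3) directly for the explicit patched profile (Bryant near the origin, $Z_1$ near infinity), using exactly the same key facts --- $L\geq 0$ from $z\leq 1$, $K\geq 0$ from monotonicity of $z$ in $u$, explicit positivity of $K$ in the exterior region, and decay of $\partial_s^i\psi$ for the cylindrical asymptotics --- and then conclude by openness. If anything you are slightly more explicit than the paper about the two delicate points (that the smoothing in the patching region must preserve the curvature sign, and that openness of (P3) requires perturbations not to disturb the asymptotics at infinity), but the argument is the same.
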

\begin{proof}
Define 
\begin{align*}
\tilde{\CMcal{G}}^\ast_N:=\left\{ g_0\in\CMcal{G}_N: g_0\text{ satisfies P(1)--P(3)}\right\}.
\end{align*}
We first show that $\tilde{\CMcal{G}}^\ast_N$ is nonempty.

Let $\tau=\tau_0$ correspond to $t=0$. By the proof of Theorem \ref{thm1}, there exists $\hat{z}_0$ which is obtained by patching scaled copies of $\mathfrak{B}$ and $Z_1$. Let $\hat{g}_0$ be the metric determined by the profile function $\hat{z}_0$. For $\hat{g}_0$,  $K=-(z_u/2u)e^{\tau_0}=-(z_r/2r)e^{(\lambda+1)\tau_0}>0$ at the origin. Observe that the patching occurs in $R_D\leq r\leq 2R_D$, where $R_D := D\sqrt{A_3/A_2}$ for an arbitrary constant $D>0$. So by the continuity of $K$ there exists $D_0$ such that $K>0$ for $0< r\leq 2R_0$, where $R_0:=R_{D_0}$. On the other hand, where $\hat{z}_0=A_2u^{-2}\left(1-u^2\right)^{1+\lambda}$, we have 
\begin{align*}
 K & = - \frac{z_{\psi}}{2\psi} = - \frac{z_u}{2u} e^{\tau} = A_2 u^{-4} \left(1-u^2\right)^\lambda \left(1+\lambda u^2\right)e^{\tau}>0.
\end{align*}
Hence, the piecewise smooth function $\hat{z}_0$ determines a metric $\hat{g}_0$ for which $K>0$ on $\RR^N$ where $\hat{g}_0$ is smooth; moreover, $K\searrow 0$ as $u\nearrow 1$, i.e., as one approaches spatial inifinity. Since $z^{-}<z^{+}$, we can smooth $\hat{z}_0$ to obtain a smooth metric $g_0$ for which $K\geq0$ everywhere with $K>0$ at the origin, and $g_0\in\CMcal{G}_N$. Also for this metric $g_0$, because $L=(1-z)/\psi^2$, $L\geq 0$ everywhere with $L>0$ at the origin, and $L\to 1/\psi^2$ as we approach spatial infinity. Thus, $g_0$ satisfies (P1).

To check (P2), we first note that $\left\vert \Rm_{g_0} \right\vert$ is bounded by the proof of Theorem \ref{thm1}. The derivatives $\nabla^i\Rm_{g_0}$ ($i\in\NN$) are determined by $\p^i_s K$ and $\p^i_s L$. Recall that $s(u,\tau)=e^{-\tau/2}\sigma(u)$ and $z=2(n-1)u_\sigma^2$. Then at time $\tau_0$,
\begin{align*}
\frac{\p s}{\p u} = \frac{\p \sigma}{\p u}e^{-\tau_0/2} = e^{-\tau_0/2}\frac{\sqrt{2(n-1)}}{\sqrt{z_0}}.
\end{align*}
Since $0<z^{-}<z_0<z^{+}$, arguing as in the proof of Lemma \ref{complete}, there exists $u_0\in(0,1)$ such that for $u_0\leq u <1$,
\begin{align}\label{eq:u_s}
\frac{\p u}{\p s}& \lesssim \sqrt{z^{+}_{\text{ext}}} \lesssim \left(1-u^2\right)^{(\lambda+1)/2}\quad (\lambda\geq 1).
\end{align}
By the chain rule that $\p_s = \left(\p u/\p s\right) \p_u$, one checks that
\begin{align*}
 \left\vert K_s \right\vert \lesssim \left(1-u^2\right)^{(3\lambda-1)/2},\quad \left\vert L_s \right\vert \lesssim \left(1-u^2\right)^{(\lambda+1)/2}+O\left(\left(1-u^2\right)^{(3\lambda+1)/2}\right).
\end{align*}
So $K_s$ and $L_s$ are bounded. Similarly, direct computation shows that $\left\vert \p^i_s K \right\vert$ and $\left\vert \p^i_s L \right\vert$ are bounded for $i=2,3,4$. If $0<u\leq u_0$, then we are looking at a compact subset of $\RR^N$ where $\left\vert \nabla^i\Rm_{g_0} \right\vert$ are bounded for any $i\in\NN$ because $g_0$ is smooth. Thus, $g_0$ satisfies (P2).

To check (P3), we let $\{y_k\}_{k=0}^\infty$ be a sequence of points whose $s$-coordinates $s_k\nearrow\infty$ as $k\nearrow\infty$. Let $U_k := (-k,\infty)\times S^{n}(r_0)$ be an exhaustion of the cylinder $\RR\times S^n(r_0)$. Then the translation map $s\mapsto (s+2k)$ defines an embedding $\psi_k: U_k\to\RR^N$, $V_k:=\psi_k(U_k)=(k,\infty)\times S^{n}(r_0)$. We claim that for $g_0=ds^2+\psi(s,\tau_0)^2 g_{\text{sph}}$,
\begin{align}\label{eq:CG}
\left. g_0\right\vert_{V_k} \overset{C^3}\longrightarrow g_{\text{cyl}}\text{ on compact subsets of } \RR\times S^n(r_0),
\end{align}
where $g_{\text{cyl}}= ds^2 + r_0^2 g_{\text{sph}}$ is the standard metric on the round cylinder. Without loss of generality, assume $r_0=1$. For all sufficiently large $k$, the $u$-coordinate of $y_k$ is bounded between $u_0$ and $1$. At $t=0$, recall that $\psi = C u$ for some constant $C=C(n)$, so $\p^i_s\psi \lesssim \p^i_s u$ ($i\in\NN$). Then at $\tau=\tau_0$, as $s_k\nearrow\infty$, $\psi\lesssim u \nearrow 1$, and hence from \eqref{eq:u_s}, we obtain
\begin{align*}
\psi_s & \lesssim u_s \lesssim \left(1-u^2\right)^{\frac{(\lambda+1)}{2}} \searrow 0,\\
\psi_{ss} & \lesssim u_{ss} \lesssim \left(1-u^2\right)^{\lambda}\searrow 0,\\
\psi_{sss} &\lesssim u_{sss} \lesssim \left(1-u^2\right)^{\frac{(3\lambda-1)}{2}}\searrow 0.
\end{align*}
This shows \eqref{eq:CG}\footnote{One checks that $\p^{i}_s\psi\lesssim \p^{i}_s u\lesssim \left(1-u^2\right)^{1+i\frac{(\lambda-1)}{2}}\searrow 0$ as $u\nearrow 1$ for all $i\in\NN$, so we in fact have convergence in smooth pointed Cheeger-Gromov topology.}, and hence $g_0$ satisfies (P3).

Therefore, $g_0\in\tilde{\CMcal{G}}^\ast_N$, and the set $\tilde{\CMcal{G}}^\ast_N$ is nonempty.

Since the sectional curvatures depend smoothly on the metric, there is an open set $\CMcal{G}^\ast_N \supset \tilde{\CMcal{G}}^\ast_N$ in $C^6$ topology such that $\CMcal{G}^\ast_N \subset \CMcal{G}_N$ and any $g\in\CMcal{G}^\ast_N$ satisfies P(1)--P(3). The lemma is now proved. \end{proof}

We now prove Theorem \ref{thm2}.
\begin{proof}[Proof of Theorem \ref{thm2}] By Lemma \ref{stansol}, the Ricci flow solution $g(t)$ on $\RR^N$ starting at any $g_0\in\CMcal{G}^\ast_N$ is a standard solution. Since $g_0\in\CMcal{G}_N$, Theorem \ref{thm1} applies to $g(t)$, and so Theorem \ref{thm2} follows.
\end{proof}


\begin{bibdiv}
\begin{biblist}

\bib{AV95}{article}{
   author={Angenent, S. B.},
   author={Vel{\'a}zquez, J. J. L.},
   title={Asymptotic shape of cusp singularities in curve shortening},
   journal={Duke Math. J.},
   volume={77},
   date={1995},
   number={1},
   pages={71\ndash 110},
}

\bib{AV97}{article}{
   author={Angenent, S. B.},
   author={Vel{\'a}zquez, J. J. L.},
   title={Degenerate neckpinches in mean curvature flow},
   journal={J. Reine Angew. Math.},
   volume={482},
   date={1997},
   pages={15\ndash 66},
}

\bib{AK04}{article}{
   author={Angenent, Sigurd},
   author={Knopf, Dan},
   title={An example of neckpinching for Ricci flow on $S^{n+1}$},
   journal={Math. Res. Lett.},
   volume={11},
   date={2004},
   number={4},
   pages={493--518},
}

\bib{ACK12}{article}{
   author={Angenent, Sigurd B.},
   author={Caputo, M. Cristina},
   author={Knopf, Dan},
   title={Minimally invasive surgery for Ricci flow singularities},
   journal={J. Reine Angew. Math.},
   volume={672},
   date={2012},
   pages={39--87},
}

\bib{AIK11}{article}{
   author={Angenent, Sigurd B.},
   author={Isenberg, James},
   author={Knopf, Dan},
   title={Formal matched asymptotics for degenerate Ricci flow neckpinches},
   journal={Nonlinearity},
   volume={24},
   date={2011},
   number={8},
   pages={2265--2280},
}

\bib{AIK12}{article}{
   author={Angenent, Sigurd B.},
   author={Isenberg, James},
   author={Knopf, Dan},
   title={Degenerate neckpinches in Ricci flow},
   journal={to appear in J. Reine Angew. Math.},
   eprint={arXiv:1208.4312},
}

\bib{AK07}{article}{
   author={Angenent, Sigurd B.},
   author={Knopf, Dan},
   title={Precise asymptotics of the Ricci flow neckpinch},
   journal={Comm. Anal. Geom.},
   volume={15},
   date={2007},
   number={4},
   pages={773--844},
}

\bib{BW08}{article}{
   author={B{\"o}hm, Christoph},
   author={Wilking, Burkhard},
   title={Manifolds with positive curvature operators are space forms},
   journal={Ann. of Math. (2)},
   volume={167},
   date={2008},
   number={3},
   pages={1079--1097},
}

\bib{B11}{article}{
   author={Brendle, Simon},
   title={Uniqueness of gradient Ricci solitons},
   journal={Math. Res. Lett.},
   volume={18},
   date={2011},
   number={3},
   pages={531--538},
}

\bib{B08}{article}{
   author={Brendle, Simon},
   title={A general convergence result for the Ricci flow in higher
   dimensions},
   journal={Duke Math. J.},
   volume={145},
   date={2008},
   number={3},
   pages={585--601},
}

\bib{B12-2}{article}{
   author={Brendle, Simon},
   title={Rotational symmetry of Ricci solitons in higher dimensions},
   journal={to appear in J. Differential Geom.},
   eprint={arXiv:1203.0270},
}

\bib{B13}{article}{
   author={Brendle, Simon},
   title={Rotational symmetry of self-similar solutions to the Ricci flow},
   journal={Invent. Math.},
   volume={194},
   date={2013},
   number={3},
   pages={731--764},
}

\bib{Bryant}{article}{
    author = {Bryant, R. L.},
     title = {Ricci flow solitons in dimension three with SO(3)-symmetries}, 
	journal = {available at \url{www.math.duke.edu/~bryant/3DRotSymRicciSolitons.pdf}},
}

\bib{CCh12}{article}{
   author={Cao, Huai-Dong},
   author={Chen, Qiang},
   title={On locally conformally flat gradient steady Ricci solitons},
   journal={Trans. Amer. Math. Soc.},
   volume={364},
   date={2012},
   number={5},
   pages={2377--2391},
}

\bib{ChZh06}{article}{
   author={Chen, Bing-Long},
   author={Zhu, Xi-Ping},
   title={Uniqueness of the Ricci flow on complete noncompact manifolds},
   journal={J. Differential Geom.},
   volume={74},
   date={2006},
   number={1},
   pages={119--154},
}

\bib{Ch91}{article}{
   author={Chen, Haiwen},
   title={Pointwise $\frac14$-pinched $4$-manifolds},
   journal={Ann. Global Anal. Geom.},
   volume={9},
   date={1991},
   number={2},
   pages={161--176},
}

\bib{ChW11}{article}{
   author = {Chen, Xiuxiong and Wang, Yuanqi},
    title = {On four-dimensional anti-self-dual gradient {R}icci solitons},
     date = {2011},	
  journal = {Preprint},
   eprint = {arXiv:1102.0358},
}

\bib{BCh91}{article}{
   author={Chow, Bennett},
   title={The Ricci flow on the $2$-sphere},
   journal={J. Differential Geom.},
   volume={33},
   date={1991},
   number={2},
   pages={325--334},
}

\bib{ChLN06}{book}{
   author={Chow, Bennett},
   author={Lu, Peng},
   author={Ni, Lei},
   title={Hamilton's Ricci flow},
   series={Graduate Studies in Mathematics},
   volume={77},
   publisher={American Mathematical Society},
   place={Providence, RI},
   date={2006},
   pages={xxxvi+608},
   isbn={978-0-8218-4231-7},
   isbn={0-8218-4231-5},
}

\bib{DdP07}{article}{
   author={Daskalopoulos, P.},
   author={del Pino, Manuel},
   title={Type II collapsing of maximal solutions to the Ricci flow in $\mathbb
   R^2$},
   journal={Ann. Inst. H. Poincar\'e Anal. Non Lin\'eaire},
   volume={24},
   date={2007},
   number={6},
   pages={851--874},
}

\bib{DH04}{article}{
   author={Daskalopoulos, P.},
   author={Hamilton, R.},
   title={Geometric estimates for the logarithmic fast diffusion equation},
   journal={Comm. Anal. Geom.},
   volume={12},
   date={2004},
   number={1-2},
   pages={143--164},
}

\bib{DS10}{article}{
   author={Daskalopoulos, Panagiota},
   author={Sesum, Natasa},
   title={Type II extinction profile of maximal solutions to the Ricci flow
   in $\mathbb R^2$},
   journal={J. Geom. Anal.},
   volume={20},
   date={2010},
   number={3},
   pages={565--591},
}

\bib{FIK03}{article}{
   author={Feldman, Mikhail},
   author={Ilmanen, Tom},
   author={Knopf, Dan},
   title={Rotationally symmetric shrinking and expanding gradient
   K\"ahler-Ricci solitons},
   journal={J. Differential Geom.},
   volume={65},
   date={2003},
   number={2},
   pages={169--209},
}

\bib{GI05}{article}{
   author={Garfinkle, David},
   author={Isenberg, James},
   title={Numerical studies of the behavior of Ricci flow},
   conference={
      title={Geometric evolution equations},
   },
   book={
      series={Contemp. Math.},
      volume={367},
      publisher={Amer. Math. Soc.},
      place={Providence, RI},
   },
   date={2005},
   pages={103--114},
}

\bib{GI08}{article}{
   author={Garfinkle, David},
   author={Isenberg, James},
   title={The modeling of degenerate neck pinch singularities in Ricci flow
   by Bryant solitons},
   journal={J. Math. Phys.},
   volume={49},
   date={2008},
   number={7},
   pages={073505, 10},
}

\bib{GZh08}{article}{
   author={Gu, Hui-Ling},
   author={Zhu, Xi-Ping},
   title={The existence of type II singularities for the Ricci flow on $S^{n+1}$},
   journal={Comm. Anal. Geom.},
   volume={16},
   date={2008},
   number={3},
   pages={467--494},
}

\bib{H82}{article}{
   author={Hamilton, Richard S.},
   title={Three-manifolds with positive Ricci curvature},
   journal={J. Differential Geom.},
   volume={17},
   date={1982},
   number={2},
   pages={255--306},
}

\bib{H86}{article}{
   author={Hamilton, Richard S.},
   title={Four-manifolds with positive curvature operator},
   journal={J. Differential Geom.},
   volume={24},
   date={1986},
   number={2},
   pages={153--179},
}

\bib{H88}{article}{
   author={Hamilton, Richard S.},
   title={The Ricci flow on surfaces},
   conference={
      title={Mathematics and general relativity},
      address={Santa Cruz, CA},
      date={1986},
   },
   book={
      series={Contemp. Math.},
      volume={71},
      publisher={Amer. Math. Soc.},
      place={Providence, RI},
   },
   date={1988},
   pages={237--262},
}

\bib{H95}{article}{
   author={Hamilton, Richard S.},
   title={The formation of singularities in the Ricci flow},
   conference={
      title={Surveys in differential geometry, Vol.\ II},
      address={Cambridge, MA},
      date={1993},
   },
   book={
      publisher={Int. Press, Cambridge, MA},
   },
   date={1995},
   pages={7--136},
}

\bib{Hui12}{article}{
   author={Hui, Kin Ming},
   title={Collasping behaviour of a singular diffusion equation},
   journal={Discrete Contin. Dyn. Syst.},
   volume={32},
   date={2012},
   number={6},
   pages={2165--2185},
}

\bib{I94}{article}{
   author={Ivey, Thomas},
   title={New examples of complete Ricci solitons},
   journal={Proc. Amer. Math. Soc.},
   volume={122},
   date={1994},
   number={1},
   pages={241--245},
}

\bib{K93}{article}{
    author = {King, John Robert},
     title = {Self-Similar behaviour for the equation of fast nonlinear diffusion},
   journal = {Phil. Trans. R. Soc., Lond. A},
    volume = {343},
    date = {1993},
    number = {1668},      
     pages = {337--375},
}

\bib{KL08}{article}{
   author={Kleiner, Bruce},
   author={Lott, John},
   title={Notes on Perelman's papers},
   journal={Geom. Topol.},
   volume={12},
   date={2008},
   number={5},
   pages={2587--2855},
}

\bib{Lu}{article}{
    author = {Lu, Peng},
     title = {Private communication},
}

\bib{LT}{article}{
    author = {Lu, Peng and Tian, Gang},
     title = {Uniqueness of standard solutions in the work of Perelman}, 
   journal ={available at \url{http://math.berkeley.edu/~lott/ricciflow/StanUniqWork2.pdf}},
}

\bib{M12}{article}{
    author = {M\'{a}ximo, Davi},
     title = {On the blow-up of four-dimensional Ricci flow singularities},
   journal = {to appear in J. Reine Angew. Math.},
    eprint = {arXiv:1204.5967},
}

\bib{P02}{article}{
    author = {Perelman, Grisha},
     title = {The entropy formula for the {R}icci flow and its geometric applications},
	journal = {Preprint},
	date = {2002},
	eprint = {arXiv:math/0211159},
}

\bib{P03-1}{article}{
   author = {Perelman, Grisha},
     title = {Ricci flow with surgery on three-manifolds}, 
	journal = {Preprint},
	date = {2003},
	eprint = {arXiv:math/0303109},
}

\bib{Shi89-1}{article}{
   author={Shi, Wan-Xiong},
   title={Deforming the metric on complete Riemannian manifolds},
   journal={J. Differential Geom.},
   volume={30},
   date={1989},
   number={1},
   pages={223--301},
}
	
\bib{S00}{article}{
   author={Simon, Miles},
   title={A class of Riemannian manifolds that pinch when evolved by Ricci
   flow},
   journal={Manuscripta Math.},
   volume={101},
   date={2000},
   number={1},
   pages={89--114},
}

\end{biblist}
\end{bibdiv}


\end{document}